\newtheorem{theorem}{Theorem}[section]
\newtheorem{proposition}{Proposition}[section]
\newtheorem{corollary}{Corollary}[section]
\newtheorem{lemma}{Lemma}[section]
\theoremstyle{definition}
\newtheorem{remark}{Remark}[section]
\newtheorem{definition}{Definition}[section]
\newtheorem{example}{Example}[section]
\title[Noncommutative Subsequential Weighted Ergodic Theorems]{Some noncommutative subsequential weighted individual ergodic theorems}
\keywords{Semifinite von Neumann algebra, noncommutative ergodic theorem, positive density sequence, block sequence, uniform sequence, bounded Besicovitch sequence}
\subjclass[2020]{47A35, 46L52}
\author{Morgan O'Brien}
\address{North Dakota State University\\ Department of Mathematics\\ 1210 Albrecht Boulevard, Minard Hall \\ Fargo, ND 58102, USA}
\email{morgan.obrien@ndsu.edu, obrienmorganc@gmail.com}
\begin{document}
\begin{abstract}
This article is devoted to studying individual ergodic theorems for subsequential weighted ergodic averages on the noncommutative $L_p$-spaces associated to a semifinite von Neumann algebra $\mathcal{M}$.
In particular, we establish the convergence of these averages along sequences with density one and certain types of block sequences with positive lower density, and we extend known results along uniform sequences in the sense of Brunel and Keane.
\end{abstract}
\date{March 30, 2021}

\maketitle
\section{Introduction}

Beginning with the subsequential ergodic theorems of Blum-Hanson (for norm convergence \cite{bh}) and Brunel-Keane (for a.e. convergence \cite{bk}), proving the norm or a.e. convergence of subsequential or weighted averages has been an active focus of research in ergodic theory.  Some of these results have been extended to the von Neumann algebra setting.

Let $\mathcal{M}$ be a semifinite von Neumann algebra acting on a Hilbert space $\mathcal{H}$ (containing the identity operator, denoted by $1$) with a  normal semifinite faithful (n.s.f.) trace $\tau$, and let $L_1(\mathcal{M},\tau)$ be the space of $\tau$-measurable operators affiliated to $\mathcal{M}$ such that $\|x\|_1:=\tau(|x|)<\infty$.

Let $T:L_1(\mathcal{M},\tau)+\mathcal{M}\to L_1(\mathcal{M},\tau)+\mathcal{M}$ be a positive linear map which contracts $L_1(\mathcal{M},\tau)$ and $\mathcal{M}$. If $L_p(\mathcal{M},\tau)$ is the associated noncommutative $L_p$-space to $\mathcal{M}$, then $T$ extends to a positive linear contraction on $L_p(\mathcal{M},\tau)$ for every $1\leq p\leq\infty$.

Given $1\leq p\leq\infty$ and $x\in L_p(\mathcal{M},\tau)$, let $$M_n(T)(x)=\frac{1}{n}\sum_{j=0}^{n-1}T^j(x)$$ be the $n$-th \textit{ergodic average} of $x$ associated to $T$. If $\textbf{k}=\{k_j\}_{j=0}^{\infty}$ is a strictly increasing sequence of nonnegative integers, let $$M_n^{\textbf{k}}(T)(x)=\frac{1}{n}\sum_{j=0}^{n-1}T^{k_j}(x)$$denote the \textit{$n$-th subsequential ergodic average} of $x$ associated to $T$ and $\textbf{k}$, or just the $n$-th subsequential average for short. 

In \cite{la} it was proven that the averages $M_n(T)(x)$ converge almost uniformly (a.u.) when $T$ is an automorphism on $\mathcal{M}$, $\tau$ is a faithful $T$-invariant state, and $x\in\mathcal{M}$. It was shown in \cite{ye} that $\tau$ can be replaced by a n.s.f. trace (i.e. not necessarily a state), $x\in\mathcal{M}$ by $x\in L_1(\mathcal{M},\tau)$, and $T$ by a positive Dunford Schwartz operator, at the price of the convergence holding bilaterally almost uniformly (b.a.u.). This result was then extended in \cite{jx} to hold on $L_p(\mathcal{M},\tau)$ for each $1\leq p<\infty$, and to hold a.u. whenever $2\leq p<\infty$. Alternative proofs of both of these results are given in \cite{li} using the notions of uniform and bilaterally uniform equicontinuity in measure at zero.
In \cite{lm} it was proven that the subsequential ergodic averages $\{M_n^{\textbf{k}}(T)_n(x)\}_{n=1}^{\infty}$ associated to a uniform sequence $\{k_j\}_{j=0}^{\infty}$ (see Section 4) converge b.a.u. for every $x\in L_1(\mathcal{M},\tau)$.

If $\beta=\{\beta_k\}_{k=0}^{\infty}$ is a sequence of complex numbers, then $$M_n^\beta(T)(x)=\frac{1}{n}\sum_{k=0}^{n-1}\beta_kT^k(x)$$ will be called the $n$-th \textit{weighted ergodic average} of $x$, and the $n$-th \textit{subsequential weighted ergodic average} of $x$ is defined as $$M_n^{\beta,\textbf{k}}(T)(x)=\frac{1}{n}\sum_{j=0}^{n-1}\beta_{k_j}T^{k_j}(x).$$

In this article, we will prove that the subsequential weighted averages are b.u.e.m. at zero on $(L_p(\mathcal{M},\tau),\|\cdot\|_p)$ for each $1\leq p<\infty$ and u.e.m. at zero for $2\leq p<\infty$ when $\{k_j\}_{j=0}^{\infty}$ is a sequence with positive lower density and $\{\beta_j\}_{j=0}^{\infty}$ is bounded (where we write u.e.m. or b.u.e.m. at zero for uniform or bilaterally uniform equicontinuity in measure at zero, which are given in detail in Definition 2.1). Utilizing this fact, we will prove that the subsequential weighted averages along any sequence of density one converge a.u. (b.a.u.) in $L_p$ for $2\leq p<\infty$ ($1\leq p<\infty$) for weights which work for the full sequence. After that, we extend the results of Litvinov and Mukhamedov by showing that the subsequential weighted averages along a uniform sequence converge a.u. (b.a.u.) in $L_p$ for $2\leq p<\infty$ ($1\leq p<\infty$) for certain weights. Finally, we prove that the subsequential weighted averages along certain types of block sequences with positive density converge a.u. (b.a.u.) in $L_p$ for $2\leq p<\infty$ ($1\leq p<\infty)$ when weighted by a bounded Besicovich sequence.

\section{Preliminaries}

We will begin by providing a brief review of the notation and properties of noncommutative integration theory that will be used in the sequel. For more in depth details and proofs of these results, see \cite{fk}, \cite{ne}, or \cite{se}.

Let $\mathcal{B}(\mathcal{H})$ be the space of bounded linear operators on $\mathcal{H}$, with the operator norm $\|\cdot\|_\infty$. Let $x:\mathcal{D}_x\to \mathcal{H}$ be a closed densely defined linear operator on $\mathcal{H}$, where $\mathcal{D}_x\subseteq \mathcal{H}$ is the domain of $x$. Then $x$ is said to be \textit{affiliated} with $\mathcal{M}$ if $yx\subseteq xy$ for every $y\in \mathcal{M}'$, where $\mathcal{M}'$ is the commutant $\{y\in \mathcal{B}(\mathcal{H}):zy=yz\text{ for every }z\in\mathcal{M}\}$ of $\mathcal{M}$. Let $\mathcal{P}(\mathcal{M})=\{e\in \mathcal{M}:e=e^*=e^2\}$ denote the set of projections in $\mathcal{M}$, and if $e\in \mathcal{P}(\mathcal{M})$, we will write $e^\perp=1-e$, where $1$ is the identity of $\mathcal M$.

A closed densely defined operator $x$ affiliated with $\mathcal{M}$ is called $\tau$\textit{-measurable} if for every $\epsilon>0$ there exists $e\in\mathcal{P}(\mathcal{M})$ such that $\tau(e^\perp)\leq\epsilon$ and $e(\mathcal{H})\subseteq\mathcal{D}_x$. Let $L_0(\mathcal{M},\tau)$ denote the space of $\tau$-measurable operators affiliated with $\mathcal{M}$.

For every $\epsilon,\delta>0$, let
\[
V(\epsilon,\delta)=\{x\in L_0(\mathcal{M},\tau):\|xe\|_\infty\leq\epsilon\text{ for some }e\in \mathcal{P}(\mathcal{M})\text{ with }\tau(e^\perp)\leq\delta\}.
\] 
The \textit{measure topology} on $L_0(\mathcal{M},\tau)$ is the topology generated by the system of neighborhoods of zero $\{V(\epsilon,\delta):\epsilon,\delta>0\}$. With this, $L_0(\mathcal{M},\tau)$ is a complete metrizable topological $*$-algebra containing $\mathcal{M}$ as a dense subspace \cite{ne}.

If $\mathcal{S}\subseteq L_0(\mathcal{M},\tau)$, then we will let $\mathcal{S}^+=\{x\in\mathcal{S}:x\geq0\}$.
If $x$ is a positive, self-adjoint operator affiliated with $\mathcal{M}$ and $x=\int_{0}^{\infty}\lambda de_{\lambda}$ is its spectral decomposition, then
$$
\tau(x)=\sup_{n\geq1}\tau\Big(\int_{0}^{n}\lambda de_{\lambda}\Big).
$$
Given $1\leq p<\infty$, we will let $$L_p(\mathcal{M},\tau)=\{x\in L_0(\mathcal{M},\tau):\tau(|x|^p)<\infty\},$$ where $|x|=(x^*x)^{1/2}$ is the \textit{absolute value} of $x$. Define $\|x\|_p=\tau(|x|^p)^{1/p}$ whenever $x\in L_p(\mathcal{M},\tau)$ and $1\leq p<\infty$. We will let $L_\infty(\mathcal{M},\tau)=\mathcal{M}$, equipped with the operator norm $\|\cdot\|_\infty$. Then $\|\cdot\|_p$ is a norm on $L_p(\mathcal{M},\tau)$ which turns it into a Banach space whenever $1\leq p\leq\infty$. Note that $L_1\cap\mathcal{M}$ and $L_p\cap\mathcal{M}$ are both dense in $L_p(\mathcal{M},\tau)$ for $1\leq p<\infty$ with respect to the norm $\|\cdot\|_p$.

The semifinite von Neumann algebra ``of interest'' throughout this paper will always be denoted by $\mathcal{M}$ and its n.s.f. trace by $\tau$. As such, we will often write $L_p=L_p(\mathcal{M},\tau)$ to ease notation whenever $1\leq p<\infty$ or $p=0$ (and when confusion will not occur). 

We say that $\{x_n\}_{n=1}^{\infty}\subset L_0$ converges to $x\in L_0$ \textit{almost uniformly} (\textit{a.u.}) (respectively, \textit{bilaterally almost uniformly} (\textit{b.a.u.})) if for every $\epsilon>0$ there exists $e\in \mathcal{P}(\mathcal{M})$ such that 
\[
\tau(e^\perp)\leq\epsilon\text{ \ and \ } \|(x_n-x)e\|_\infty\to0\ (\text{resp., }\|e(x_n-x)e\|_\infty\to0)\text{ as }n\to\infty.
\]
Clearly a.u. convergence implies b.a.u. convergence; however, in the commutative case a.u. and b.a.u. convergences are equivalent. As shown in Example 3.1 of \cite{cl2}, this is not the case in general.

\begin{definition}\label{d21} Let $(X,\|\cdot\|)$ be a normed space. A sequence of maps $A_n: X\to L_0$, $n=1,2,\dots$, is said to be \textit{bilaterally uniformly equicontinous in measure} (\textit{b.u.e.m.}) (\textit{uniformly equicontinuous in measure} (\textit{u.e.m.})) at zero if for every $\epsilon>0$ and $\delta>0$ there exists $\gamma>0$ such that, given $x\in X$ with $\|x\|<\gamma$, there exists $e\in\mathcal{P}(\mathcal{M})$ such that
\[
\tau(e^\perp)\leq\epsilon, \ \text{ and } \ \sup_{n}\|eA_n(x)e\|_\infty\leq\delta\ (\text{resp.,\ }\sup_{n}\|A_n(x)e\|_\infty\leq\delta).
\]
\end{definition}

A linear map $T:L_1+\mathcal{M}\to L_1+\mathcal{M}$ such that
\[
\|T(x)\|_1\leq\|x\|_1\ \ \forall \ x\in L_1\ \text{ and }\ \|T(x)\|_\infty\leq\|x\|_\infty\ \ \forall \ x\in\mathcal{M}
\]
is called a \textit{Dunford-Schwartz operator}. If $T(x)\geq0$ whenever $x\geq0$, then $T$ is called a \textit{positive Dunford-Schwartz operator}, and we write $T\in DS^+(\mathcal{M},\tau)$, or just $T\in DS^+$.

Note that, as was shown in Proposition 1.1 of \cite{cl1}, absolute contractions considered in \cite{ye}, and later in \cite{lm}, can be uniquely extended to positive Dunford-Schwartz operators. Also, it was shown in Lemma 1.1 of \cite{jx} that any $T\in DS^+$ can be extended uniquely to a positive linear contraction on $L_p$ for each $1<p<\infty$. Thus, denoting these extensions by $T$, we have $\|T(x)\|_p\leq\|x\|_p$ for all $x\in L_p$.

Given $T\in DS^+$ and $1\leq p<\infty$, the positive linear maps $M_n(T):L_p\to L_p$ are continuous, and are also continuous when viewed as maps from $L_p$ to $L_0$ when $L_p$ has its norm topology and $L_0$ has the topology of convergence in measure.

Results regarding the u.e.m. and b.u.e.m. at zero of $\{M_n(T)\}_{n=1}^{\infty}$ above were established in \cite{li} (as Propositions 4.1, 4.2, 4.3 and Theorem 2.1), and Proposition 2.2 below is Theorem 2.1 of \cite{li}. We state these results here for ease of reference. 

\begin{proposition}\label{p21} The sequence $\{M_n(T)\}_{n=1}^{\infty}$ is b.u.e.m. (u.e.m.) at zero on $(L_p,\|\cdot\|_p)$ if $1\leq p<\infty$ (respectively $2\leq p<\infty$). 
\end{proposition}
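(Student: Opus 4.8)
The plan is to read both assertions as quantitative maximal inequalities ``in measure'' and to feed them from the known maximal ergodic inequalities. Indeed, Definition~\ref{d21} says precisely that $\{M_n(T)\}$ is b.u.e.m.\ (u.e.m.) at zero exactly when the quantity $\sup_n\|eM_n(T)(x)e\|_\infty$ (resp.\ $\sup_n\|M_n(T)(x)e\|_\infty$) can be made uniformly $\le\delta$ off a projection of small complementary trace, provided $\|x\|_p$ is small. First I would establish the bilateral statement for $1\le p<\infty$ directly from the maximal ergodic inequalities of Yeadon (the weak type $(1,1)$ estimate from \cite{ye}) and Junge--Xu (its $L_p$-analogue from \cite{jx}). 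Then I would bootstrap the one-sided statement for $2\le p<\infty$ from the bilateral one by a squaring argument, which is exactly where the restriction $p\ge2$ enters.

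For the b.u.e.m.\ part, fix $\epsilon,\delta>0$. The maximal inequality provides a constant $C_p$ so that for every $y\in L_p^+$ and every $\lambda>0$ there is $e\in\mathcal{P}(\mathcal{M})$ with $\tau(e^\perp)\le(C_p\|y\|_p/\lambda)^p$ (with the modification $\tau(e^\perp)\le C\|y\|_1/\lambda$ when $p=1$) and $\sup_n\|eM_n(T)(y)e\|_\infty\le\lambda$; concretely, $e$ is a spectral projection $\chi_{[0,\lambda]}(b)$ of an element $b$ dominating all $M_n(T)(y)$, combined with Chebyshev's inequality. Given a general $x\in L_p$ with $\|x\|_p<\gamma$, I would split $x$ into its four positive parts, $x=(x_1-x_2)+i(x_3-x_4)$ with each $\|x_k\|_p\le\|x\|_p$, apply the above with $\lambda=\delta/4$ to each $x_k$ to get $e_k$, and set $e=\bigwedge_{k=1}^4 e_k$. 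Since $e\le e_k$ forces $\|eM_n(T)(x_k)e\|_\infty\le\|e_kM_n(T)(x_k)e_k\|_\infty\le\lambda$, subadditivity gives $\sup_n\|eM_n(T)(x)e\|_\infty\le\delta$, while $\tau(e^\perp)\le\sum_k\tau(e_k^\perp)$. Choosing $\gamma$ small enough that this last sum is $\le\epsilon$ finishes the bilateral case.

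For the u.e.m.\ part, assume $2\le p<\infty$ and first reduce to self-adjoint $x$ by writing $x=a+ib$ with $\|a\|_p,\|b\|_p\le\|x\|_p$ and using $\|M_n(T)(x)e\|_\infty\le\|M_n(T)(a)e\|_\infty+\|M_n(T)(b)e\|_\infty$. For self-adjoint $x$ the decisive pointwise estimate is $M_n(T)(x)^2\le M_n(T)(x^2)$: operator convexity of $t\mapsto t^2$ gives $M_n(T)(x)^2\le\frac{1}{n}\sum_{j=0}^{n-1}(T^jx)^2$, and Kadison's inequality for the positive contraction $T^j$ on the self-adjoint element $x$ gives $(T^jx)^2\le T^j(x^2)$, so that $M_n(T)(x)^2\le M_n(T)(x^2)$. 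Now $x^2\in L_{p/2}^+$ with $\|x^2\|_{p/2}=\|x\|_p^2$ and $p/2\ge1$, so the bilateral result just proved applies on $L_{p/2}$: for $\|x^2\|_{p/2}$ small there is a projection $e$ with $\tau(e^\perp)\le\epsilon$ and $\sup_n\|eM_n(T)(x^2)e\|_\infty\le\delta^2$. Since $\|M_n(T)(x)e\|_\infty^2=\|eM_n(T)(x)^2e\|_\infty\le\|eM_n(T)(x^2)e\|_\infty\le\delta^2$, taking $\gamma=\sqrt{\gamma'}$ for the $\gamma'$ coming from $L_{p/2}$ (and applying the $a,b$ split with target $\delta/2$ each) yields the one-sided bound.

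The main obstacle is the one-sided control itself: knowing only $M_n(T)(x)\le b$ does \emph{not} bound $\|M_n(T)(x)e\|_\infty$, because $A\le B$ does not imply $\|Ae\|_\infty\le\|Be\|_\infty$ (squaring is not operator monotone), so the bilateral estimate cannot simply be upgraded. The squaring identity $\|Ae\|_\infty^2=\|eA^2e\|_\infty$ is precisely what converts a one-sided question on $L_p$ into a bilateral one on $L_{p/2}$, and this is exactly why $p\ge2$ is required. The one technical point to verify is the Schwarz inequality $(T^jx)^2\le T^j(x^2)$ for self-adjoint $x$; this is Kadison's inequality, valid for positive (sub)unital maps on self-adjoint elements via commutative functional calculus, so the reduction to self-adjoint $x$ keeps the argument clean and sidesteps any need for full $2$-positivity of $T$.
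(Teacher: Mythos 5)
Your argument is correct in substance, but note that the paper does not actually prove Proposition~\ref{p21}: it is imported from \cite{li} (Propositions 4.1--4.3 and Theorem 2.1 there), so the only written-out proof to compare with is that of the weighted analogue, Proposition~\ref{p31}. For the bilateral case your route coincides with that one: invoke the bilateral maximal ergodic inequality (weak type $(1,1)$ from \cite{ye}, the $L_p$ version from \cite{jx}, packaged as Theorem 2.1 of \cite{cl1}) and then shrink $\gamma$ so that the resulting bound on $\tau(e^\perp)$ drops below $\epsilon$; your four-part positive decomposition is simply absorbed into the constants $4$ and $48C$ appearing in the proof of Proposition~\ref{p31}. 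For the one-sided case the paper and \cite{li} just quote a ready-made one-sided maximal inequality for $2\le p<\infty$ (Theorem 2.3 of \cite{cl1}), whereas you re-derive it from the bilateral one via $\|Ae\|_\infty^2=\|eA^*Ae\|_\infty$ for $A=M_n(T)(x)$ self-adjoint, operator convexity of $t\mapsto t^2$, and the Kadison--Schwarz inequality for the subunital positive maps $T^j$. That reduction is exactly the mechanism behind the quoted inequality, so your version is more self-contained and correctly isolates why $p\ge2$ enters (so that $x^2\in L_{p/2}$ with $p/2\ge1$). The one point you should not wave away is the validity of $(T^jx)^2\le T^j(x^2)$ for \emph{unbounded} self-adjoint $x\in L_p$: Kadison's inequality is a statement about bounded operators, so either approximate $x$ by the truncations $x\chi_{[-N,N]}(x)$ and use continuity of $T$ on $L_p$ and $L_{p/2}$ together with closedness of the positive cone in the measure topology, or (as the cited sources do) establish the one-sided estimate on $L_p\cap\mathcal{M}$ and extend by density. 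With that caveat filled in, the proposal is a complete and correct proof.
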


\begin{proposition}\label{p22} Let $(X,\|\cdot\|)$ be a Banach space, $A_n:X\to L_0$ be a sequence of linear maps that is b.u.e.m. (u.e.m.) at zero on $X$. Then the set
$$C=\{x\in X:\{A_n(x)\}\text{ converges b.a.u. (a.u.)}\}$$
is closed in $X$.
\end{proposition}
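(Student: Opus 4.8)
The plan is to prove closedness directly: given a sequence $\{x_k\}\subset C$ with $x_k\to x$ in $X$, I would show $x\in C$, i.e.\ that $\{A_n(x)\}$ converges b.a.u.\ (a.u.). The engine of the argument is a noncommutative Banach-principle estimate showing that $\{A_n(x)\}$ is \emph{b.a.u.\ Cauchy}, meaning that for every $\epsilon>0$ there is a projection $e$ with $\tau(e^\perp)\le\epsilon$ for which $\{eA_n(x)e\}_n$ is Cauchy in $\|\cdot\|_\infty$. I will carry out the b.a.u.\ case (using the b.u.e.m.\ hypothesis); the a.u.\ case is verbatim the same after replacing every two-sided compression $e(\,\cdot\,)e$ by the one-sided compression $(\,\cdot\,)e$ and invoking u.e.m.\ in place of b.u.e.m.

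For the construction, fix $\epsilon>0$. For each $m\ge1$ I would apply Definition~\ref{d21} with the pair $(\epsilon\,2^{-m-1},\,1/m)$ to produce $\gamma_m>0$, and then choose an index $k_m$ with $\|x-x_{k_m}\|<\gamma_m$, which is possible since $x_k\to x$. Applying b.u.e.m.\ to $x-x_{k_m}$ yields a projection $g_m$ with $\tau(g_m^\perp)\le\epsilon\,2^{-m-1}$ and $\sup_n\|g_mA_n(x-x_{k_m})g_m\|_\infty\le 1/m$. Since $x_{k_m}\in C$, the sequence $\{A_n(x_{k_m})\}$ converges b.a.u., so there is a projection $h_m$ with $\tau(h_m^\perp)\le\epsilon\,2^{-m-1}$ such that $\{h_mA_n(x_{k_m})h_m\}_n$ converges, hence is Cauchy, in $\|\cdot\|_\infty$. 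Setting $e=\bigwedge_{m\ge1}(g_m\wedge h_m)$ and using $(g_m\wedge h_m)^\perp=g_m^\perp\vee h_m^\perp$ together with the subadditivity of $\tau$ on suprema of projections gives $\tau(e^\perp)\le\sum_{m\ge1}\big(\tau(g_m^\perp)+\tau(h_m^\perp)\big)\le\epsilon$.

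Then comes the Cauchy estimate. Fix $m$ and write, by linearity, $A_n(x)=A_n(x_{k_m})+A_n(x-x_{k_m})$. Because $e\le g_m\wedge h_m$, I have $eg_m=g_me=eh_m=h_me=e$, so compressing by $e$ lets me write $eA_n(x_{k_m})e=e\big(h_mA_n(x_{k_m})h_m\big)e$ and bound this term via the Cauchy property of $\{h_mA_n(x_{k_m})h_m\}$, while the second term obeys $\|eA_n(x-x_{k_m})e\|_\infty\le\|g_mA_n(x-x_{k_m})g_m\|_\infty\le 1/m$. The triangle inequality then gives
\[
\limsup_{n,n'\to\infty}\big\|eA_n(x)e-eA_{n'}(x)e\big\|_\infty\le \frac{2}{m}.
\]
Since the single projection $e$ does not depend on $m$ while the bound holds for every $m$, letting $m\to\infty$ shows $\{eA_n(x)e\}_n$ is $\|\cdot\|_\infty$-Cauchy, establishing the b.a.u.\ Cauchy property.

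Finally, I would upgrade ``b.a.u.\ Cauchy'' to genuine b.a.u.\ convergence, and I expect this to be the main obstacle. Concretely, taking $\epsilon=2^{-j}$ produces projections on which $\{A_n(x)\}$ is operator-norm Cauchy; passing to the increasing family $q_j$ obtained by intersecting the tails (so that $\tau(q_j^\perp)\to0$) and using completeness of $L_0$ in the measure topology, I would patch the operator-norm limits $\lim_n q_jA_n(x)q_j$ into a single $\tau$-measurable operator $y\in L_0$, checking that these compressions are consistent across different $j$ so that $y$ is well defined independently of $\epsilon$ and that $\|e(A_n(x)-y)e\|_\infty\to0$. Verifying this consistency and the $\tau$-measurability of $y$ is the delicate part; the remainder is the projection-lattice bookkeeping yielding $\tau(e^\perp)\le\epsilon$ and the routine triangle-inequality estimates above.
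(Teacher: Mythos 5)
Your argument is correct and is essentially the proof of Theorem 2.1 of \cite{li}, which this paper only cites rather than reproves: approximate $x$ by elements of $C$, use b.u.e.m.\ (u.e.m.) at zero to make $\sup_n\|g_mA_n(x-x_{k_m})g_m\|_\infty$ small on a large projection, intersect with the projections witnessing convergence of $\{A_n(x_{k_m})\}$, and conclude via the triangle inequality that $\{A_n(x)\}$ is b.a.u.\ (a.u.) Cauchy. The final step you flag as delicate --- upgrading the a.u.\ or b.a.u.\ Cauchy property to actual convergence to some $y\in L_0$ --- is precisely the completeness of $L_0$ with respect to these convergences, which is Theorem 2.3 and Remark 2.4 of \cite{cls} and is invoked as a known fact elsewhere in this paper, so it can simply be cited instead of re-derived.
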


The notions of u.e.m. and b.u.e.m. at zero were used in \cite{li} to simplify the ending of the proof of b.a.u. convergence in the individual ergodic theorem on $L_1$ from Theorem 2 of \cite{ye} and to provide alternative proofs of the extension of that result to noncommutative $L_p$-spaces (first proved in \cite{jx}).

\smallskip

\begin{theorem}\label{t21}\cite{ye,jx}. Let $\mathcal{M}$ be a semifinite von Neumann algebra with a normal semifinite faithful trace $\tau$, and let $T\in DS^+$. If $1\leq p<\infty$, then the averages $M_n(T)(x)$ converge b.a.u. to some $\widehat{x}\in L_p(\mathcal{M},\tau)$ for every $x\in L_p(\mathcal{M},\tau)$. Furthermore, this convergence occurs a.u. when $2\leq p<\infty$.
\end{theorem}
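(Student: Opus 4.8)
The plan is to combine a convergence statement on a dense subspace with the two soft results already quoted, Proposition~\ref{p21} (equicontinuity in measure) and Proposition~\ref{p22} (closedness of the convergence set). Concretely, for each admissible $p$ I would exhibit a dense subset $D\subseteq L_p$ on which the averages $M_n(T)(x)$ are already known to converge in the appropriate mode, and then invoke Proposition~\ref{p22}: since the maps $M_n(T)$ are b.u.e.m. (u.e.m.) at zero on $L_p$ by Proposition~\ref{p21}, the set $C$ of those $x$ for which $\{M_n(T)(x)\}$ converges b.a.u. (a.u.) is closed; as $D\subseteq C$ and $\overline{D}=L_p$, it follows that $C=L_p$. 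Thus the whole problem reduces to producing convergence on a conveniently chosen dense set.

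First I would treat the case $2\le p<\infty$ and aim for a.u. convergence. Here $L_p$ is reflexive and $T$ is a contraction, so the mean ergodic theorem gives the decomposition
\[
L_p=\ker(I-T)\oplus\overline{(I-T)L_p}.
\]
I would take $D=\ker(I-T)+(I-T)(L_p\cap\mathcal{M})$. On $\ker(I-T)$ the averages satisfy $M_n(T)(x)=x$ for every $n$, so they converge (trivially a.u.). For a coboundary $x=(I-T)y$ with $y\in L_p\cap\mathcal{M}$, telescoping gives $M_n(T)(x)=\tfrac{1}{n}(y-T^n y)$, whence $\|M_n(T)(x)\|_\infty\le \tfrac{2}{n}\|y\|_\infty\to0$, and uniform convergence to $0$ is in particular a.u. convergence. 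By linearity $D\subseteq C$. Since $L_p\cap\mathcal{M}$ is dense in $L_p$ and $I-T$ is continuous, $(I-T)(L_p\cap\mathcal{M})$ is dense in $(I-T)L_p$, so $\overline{D}\supseteq\ker(I-T)+\overline{(I-T)L_p}=L_p$. As $\{M_n(T)\}$ is u.e.m. at zero for $2\le p<\infty$, Proposition~\ref{p22} then yields a.u. convergence on all of $L_p$, the limit being the mean ergodic projection of $x$ onto $\ker(I-T)$.

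For $1\le p<2$ I would only claim b.a.u. convergence and reduce to the previous case. If $x\in L_p\cap\mathcal{M}$ then, using $|x|^2\le\|x\|_\infty^{2-p}|x|^p$, one has $\tau(|x|^2)\le\|x\|_\infty^{2-p}\|x\|_p^p<\infty$, so $x\in L_2$; since the $L_p$- and $L_2$-extensions of $T$ agree on $L_p\cap\mathcal{M}$ and a.u. convergence is a statement in $L_0$, the case $p=2$ already provides a.u. (hence b.a.u.) convergence of $\{M_n(T)(x)\}$ for every $x\in L_p\cap\mathcal{M}$. As $L_p\cap\mathcal{M}$ is dense in $L_p$ and $\{M_n(T)\}$ is b.u.e.m. at zero for $1\le p<\infty$, Proposition~\ref{p22} again extends b.a.u. convergence to all of $L_p$. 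This route also covers $p=1$, where the mean ergodic splitting is unavailable because $L_1$ is not reflexive.

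Finally, the genuine analytic content is carried entirely by Proposition~\ref{p21}: the equicontinuity in measure at zero encodes the maximal-type estimates for $\{M_n(T)\}$, and is precisely where the distinction between the a.u. regime ($2\le p<\infty$) and the b.a.u. regime ($p\ge1$) originates. Everything else — the mean ergodic splitting, the telescoping identity for coboundaries, and the embedding $L_p\cap\mathcal{M}\subseteq L_2$ — is soft. I therefore expect the main obstacle to be exactly the step I am permitted to assume here, with the remaining work being the routine dense-set-plus-closedness argument above.
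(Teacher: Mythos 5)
The paper does not actually prove Theorem~\ref{t21}: it is quoted from \cite{ye,jx}, and the surrounding text only records that \cite{li} reproved it via exactly the mechanism you use, namely convergence on a dense subspace plus Proposition~\ref{p21} and Proposition~\ref{p22}. Your argument is therefore the ``intended'' one, and it is also the template the author follows verbatim in the proof of Theorem~\ref{t51}: the mean ergodic splitting $L_2=\ker(I-T)\oplus\overline{(I-T)L_2}$ with coboundaries taken from $L_2\cap\mathcal{M}$, the telescoping bound $\|M_n(T)(y-Ty)\|_\infty\le\tfrac{2}{n}\|y\|_\infty$, and the passage from $L_2\cap\mathcal{M}$ (or $L_1\cap\mathcal{M}$) to all of $L_p$ via u.e.m./b.u.e.m.\ at zero and the closedness of the convergence set. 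All of those steps check out, including the density of $(I-T)(L_p\cap\mathcal{M})$ in $\overline{(I-T)L_p}$ and the consistency of the $L_p$- and $L_2$-extensions of $T$ on $L_p\cap\mathcal{M}$.

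The one step you have not supplied is the assertion, which is part of the statement, that the b.a.u.\ limit $\widehat{x}$ lies in $L_p(\mathcal{M},\tau)$. For $1<p<\infty$ this is essentially free (the mean ergodic theorem in the reflexive space $L_p$ identifies $\widehat{x}$ with the ergodic projection of $x$, using uniqueness of limits in measure), but for $p=1$ — precisely the case where you abandon the splitting — Proposition~\ref{p22} only hands you a limit in $L_0$. The standard repair, which the paper spells out at the end of the proof of Theorem~\ref{t31}, is to note that $\|M_n(T)(x)\|_1\le\|x\|_1$ for all $n$ and that balls of $L_1$ are closed in the measure topology (Theorem 1.2 of \cite{cls}), so the limit in measure of the averages stays in $L_1$. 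With that one sentence added, your proof is complete and agrees with the approach the paper relies on.
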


\section{Convergence along Sequences of Density One}

For clarity, $\mathbb{N}$ will denote the set of natural numbers, $\mathbb{N}_0=\mathbb N\cup\{0\}$, and $\mathbb C$ will stand for the set of complex numbers. Let $\chi_E:X\to\{0,1\}$ be the characteristic function of a set $E\subseteq X$.

For convenience, we will let $\textbf{1}=\{1\}_{j=0}^{\infty}$; in what follows, $\beta=\textbf{1}$ will correspond to the `non-weighted' case of the weighted results. Also, we will also identify a sequence $\textbf{k}=\{k_j\}_{j=0}^{\infty}$ of nonnegative integers with its image $\{k_j:j\in\mathbb{N}_0\}$ in $\mathbb{N}_0$.

\begin{definition}\label{d31} Let $T\in DS^+(\mathcal{M},\tau)$ and $1\leq p<\infty$. A sequence $\beta\subset\mathbb{C}$ will be called \textit{a.u.- (b.a.u.-) good for $T$ on $L_p(\mathcal{M},\tau)$} if the weighted averages $\{M_n^\beta(T)(x)\}_{n=1}^{\infty}$ converge a.u. (respectively, b.a.u.) for every $x\in L_p(\mathcal{M},\tau)$. Similarly, we will say that a strictly increasing sequence $\textbf{k}\subset\mathbb{N}_0$ is \textit{a.u.- (b.a.u.-) good for $T$ and $\beta$ on $L_p(\mathcal{M},\tau)$} if the subsequential weighted averages $\{M_n^{\beta,\textbf{k}}(T)(x)\}_{n=1}^{\infty}$ converge a.u. (respectively b.a.u.) for every $x\in L_p(\mathcal{M},\tau)$.

A sequence $\beta$ will be called \textit{a.u.- (b.a.u.-) good in $L_p(\mathcal{M},\tau)$} if $\beta$ is a.u.- (b.a.u.-) good for $T$ in $L_p(\mathcal{M},\tau)$ for every $T\in DS^+(\mathcal{M},\tau)$. Similarly, define $\textbf{k}$ as \textit{a.u.- (b.a.u.-) good for $\beta$ in $L_p(\mathcal{M},\tau)$} if it is a.u.- (b.a.u.-) good for $T$ and $\beta$ in $L_p(\mathcal{M},\tau)$ for every $T\in DS^+(\mathcal{M},\tau)$.
\end{definition}

\begin{example}\label{e32}
As stated in Theorem 2.1, for each von Neumann algebra $\mathcal{M}$ with n.s.f. trace $\tau$, the sequence $\beta=\textbf{1}$ is a.u.- (b.a.u.-) good in $L_p(\mathcal{M},\tau)$ for every $2\leq p<\infty$ (respectively, $1\leq p<\infty$).
\end{example}

\begin{example}\label{e31}
Let $\mathbb{T}=\{z\in\mathbb{C}:|z|=1\}$ denote the unit circle in $\mathbb{C}$. A function $P:\mathbb{Z}\to\mathbb{C}$ is called a \textit{trigonometric polynomial} if there exists $\{r_j\}_{j=1}^{s}\subset\mathbb{C}$ and $\{\lambda_j\}_{j=1}^{s}\subset\mathbb{T}$ such that $P(k)=\sum_{j=1}^{s}r_j\lambda_j^k$ for every $k\in\mathbb{Z}$.

\noindent A sequence $\{\beta_k\}_{k=0}^{\infty}\subset\mathbb{C}$ is called a \textit{bounded Besicovich sequence} if:

(i) $\sup_{k}|\beta_k|<\infty$

(ii) For any $\epsilon>0$, there exists a trigonometric polynomial $P$ such that
$$\limsup_{n\to\infty}\frac{1}{n}\sum_{k=0}^{n-1}|\beta_k-P(k)|<\epsilon.$$

Assume that $\mathcal{M}$ has a separable predual. It was shown in Theorem 4.6 of \cite{cls} that a bounded Besicovich sequence $\beta$ is b.a.u.-good in $L_1(\mathcal{M},\tau)$. Furthermore, it was shown in Theorem 3.1 of \cite{cl1} that $\beta$ is b.a.u.-good in $L_p(\mathcal{M},\tau)$  if $1\leq p<\infty$ and a.u.-good in $L_p(\mathcal{M},\tau)$ whenever $2\leq p<\infty$.
\end{example}

\begin{example}\label{e33}
Let $\textbf{k}$ be a uniform sequence in the sense of Brunel and Keane (details given in Section 4). If $\mathcal{M}$ has a separable predual, then it was shown in Theorem 4 of \cite{lm} that $\textbf{k}$ is b.a.u.-good for $\beta=\textbf{1}$ in $L_1(\mathcal{M},\tau)$. This can be rephrased as saying that $c=\{\chi_{\textbf{k}}(j)\}_{j=0}^{\infty}$ is b.a.u.-good in $L_1(\mathcal{M},\tau)$; see  Proposition \ref{p32} below.
\end{example}

\begin{example}\label{e34}
A sequence $\alpha=\{\alpha_k\}_{k=0}^{\infty}\subseteq\mathbb{C}$ will be said to have correlation if
$$\gamma_{\alpha}(m)=\lim_{n\to\infty}\frac{1}{n+1}\sum_{k=0}^{n}\overline{\alpha_k}\alpha_{k+m}$$ exists for every $m\in\mathbb{N}_0$. Also, set $\gamma_\alpha(-m)=\overline{\gamma(m)}$ for $m\in\mathbb{N}$. Suppose that $\alpha$ has correlation, is bounded, and satisfies the following conditions:

(i) the measure $\sigma_\alpha$ on $\mathbb{T}$ associated to $\gamma_\alpha$ given by the Bochner-Herglotz Spectral Theorem is discrete, where
$$\gamma_\alpha(m)=\int_{\mathbb{T}}\lambda^m\, d\sigma_\alpha(\lambda),\, \, m\in\mathbb{Z};$$

(ii) the limit $\lim_{n\to\infty}\frac{1}{n+1}\sum_{k=0}^{n}\alpha_k\lambda^k$ exists for all $\lambda\in\mathbb{T}$.

Suppose that $\tau(1)<\infty$ and that $T$ is a normal $\tau$-preserving $*$-homomorphism on $\mathcal{M}$. Then, by Theorem 1.3 of \cite{hs}, $\alpha$ is a.u.- (b.a.u.-) good for $T$ on $L_p(\mathcal{M},\tau)$ for $2\leq p<\infty$ (respectively, for $1\leq p<\infty$).

In fact, \cite{hs} proves a stronger result. Namely, for every $\epsilon>0$ there exists $e\in\mathcal{P}(\mathcal{M})$ such that $\tau(e^\perp)\leq\epsilon$ and the sequence $\{M_n^\alpha(T)(x)e\}_{n=1}^{\infty}$ ( $\{eM_n^\alpha(T)(x)e\}_{n=1}^{\infty}$) converges in $\mathcal{M}$ for \textit{every} sequence $\alpha$ as above when $x\in L_2$ (respectively, $x\in L_1$). Furthermore, it was shown that this type of sequence strictly contains all bounded Besicovich sequences.
\end{example}

Recall that $E\subset\mathbb{N}_0$ is said to have \textit{density (lower density) $d$} if
$$\lim_{n\to\infty}\frac{\operatorname{card}(\{0,...,n\}\cap E)}{n+1}=d \text{ \ \ (respectively, } \liminf_{n\to\infty}\frac{\operatorname{card}(\{0,...,n\}\cap E)}{n+1}=d).$$
Accordingly, a sequence $\textbf{k}=\{k_j\}_{j=0}^{\infty}$ of nonnegative integers has \textit{density (lower density)} $d$ if the set $\textbf{k}=\{k_j:j\in\mathbb{N}_0\}$ has density (respectively, lower density) $d$.

\begin{remark}\label{r31}
Note that, if $\textbf{k}$ has density $d>0$, then $\lim_{n\to\infty}\frac{k_n}{n}=\frac{1}{d}$. More generally, by Lemma 40 of \cite{ro}, $\textbf{k}$ having lower density $d>0$ is equivalent to the condition $\sup_{n\geq1}\frac{k_n}{n}<\infty$.
\end{remark}

In what follows, $T\in DS^+$ and $\textbf{k}=\{k_j\}_{j=0}^{\infty}\subset\mathbb{N}_0$ is a strictly increasing sequence.

\begin{proposition}\label{p31}
Let $\beta=\{\beta_j\}_{j=0}^{\infty}\in\ell_\infty(\mathbb{C})$. If $\textbf{k}=\{k_j\}_{j=0}^{\infty}$ has lower density $d>0$, then the sequences $\{M_n^\beta(T)\}_{n=1}^{\infty}$ and $\{M_n^{\beta,\textbf{k}}(T)\}_{n=1}^{\infty}$ are b.u.e.m. at zero on $(L_p,\|\cdot\|_p)$ for each $1\leq p<\infty$ and u.e.m. at zero on $(L_p,\|\cdot\|_p)$ for each $2\leq p<\infty$.
\end{proposition}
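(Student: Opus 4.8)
The plan is to reduce the weighted and subsequential-weighted averages to the ordinary ergodic averages $M_n(T)$, for which Proposition~\ref{p21} already supplies the desired b.u.e.m. and u.e.m. properties. The key structural fact to exploit is that equicontinuity in measure at zero is a statement about \emph{one} small $x$ at a time: given $\epsilon,\delta>0$ we must produce a single $\gamma>0$ and, for each $x$ with $\|x\|_p<\gamma$, a single projection $e$ that simultaneously controls all the averages. So the main task is to dominate $\sup_n\|M_n^{\beta,\mathbf{k}}(T)(x)\,e\|_\infty$ (and its two-sided analogue) by a constant multiple of $\sup_n\|M_n(S)(y)\,e\|_\infty$ for a suitable positive Dunford--Schwartz operator $S$ and a suitably rescaled $y$, and then invoke Proposition~\ref{p21}.

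First I would handle the plain weighted case $\{M_n^\beta(T)\}$. Since $\beta\in\ell_\infty$, write $B=\sup_k|\beta_k|$. The natural comparison is $M_n^\beta(T)(x)=\frac1n\sum_{k=0}^{n-1}\beta_k T^k(x)$ against $M_n(T)(|x|')$ for an appropriate positive element; the obstacle here is that the $\beta_k$ are complex scalars, so one cannot directly bound a weighted sum of $T^k(x)$ by an unweighted one operator-theoretically. The standard device is to split $\beta$ into real and imaginary parts and each of those into positive and negative parts, giving four sequences in $[0,B]$, and to treat $x$ itself via its decomposition into (up to four) positive pieces using $x = (x_1-x_2)+i(x_3-x_4)$ with $x_j\geq0$ and $\|x_j\|_p\leq \|x\|_p$ (for instance from the real/imaginary and positive/negative parts of $x$). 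For a positive weight $0\le\beta_k\le B$ and a positive $y\ge0$ one has the operator inequality $0\le \frac1n\sum_{k=0}^{n-1}\beta_k T^k(y)\le B\,M_n(T)(y)$, since each $T^k$ is positive. Combining the finitely many pieces, $\|M_n^\beta(T)(x)\,e\|_\infty$ is bounded by a fixed constant (depending only on $B$) times $\max_j \sup_n\|M_n(T)(y_j)\,e\|_\infty$ over the positive pieces $y_j$, and each $\|y_j\|_p\le C\|x\|_p$. Feeding this into the b.u.e.m./u.e.m. estimate for $\{M_n(T)\}$ from Proposition~\ref{p21} (applied with $\delta$ replaced by $\delta$ over the constant, and intersecting the finitely many projections) yields the claim for $\{M_n^\beta(T)\}$.

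For the subsequential averages $\{M_n^{\beta,\mathbf{k}}(T)\}$, the extra input is the positive-lower-density hypothesis, which by Remark~\ref{r31} gives $\sup_{n\ge1} k_n/n = L<\infty$. The idea is that the subsequential average over $n$ terms only involves $T^{k_0},\dots,T^{k_{n-1}}$, whose exponents all lie in $\{0,1,\dots,k_{n-1}\}$, and $k_{n-1}\le L\,n$ roughly. Concretely, after the same four-fold positivity reduction, for a positive weight and $y\ge0$ I would compare $\frac1n\sum_{j=0}^{n-1}\beta_{k_j}T^{k_j}(y)$ with $\frac{k_{n-1}+1}{n}\,M_{k_{n-1}+1}(T)(y)$: every term $\beta_{k_j}T^{k_j}(y)$ is a positive summand appearing (once) inside $B\sum_{k=0}^{k_{n-1}}T^k(y)=B(k_{n-1}+1)M_{k_{n-1}+1}(T)(y)$, so $0\le \frac1n\sum_{j=0}^{n-1}\beta_{k_j}T^{k_j}(y)\le B\frac{k_{n-1}+1}{n}M_{k_{n-1}+1}(T)(y)$. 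The factor $(k_{n-1}+1)/n$ is bounded by a constant (essentially $L+1$) uniformly in $n$ by Remark~\ref{r31}, so taking the supremum over $n$ we again dominate by a fixed constant times $\sup_m\|M_m(T)(y)\,e\|_\infty$. The main obstacle, and the step requiring the most care, is precisely this bookkeeping: ensuring the positivity-based domination is valid at the level of the $\tau$-measurable operators (so that $0\le A\le B$ implies $\|Ae\|_\infty\le\|Be\|_\infty$, and likewise $\|eAe\|_\infty\le\|eBe\|_\infty$, for the two-sided estimate) and that the constant $L$ from the lower-density hypothesis is genuinely uniform in $n$. Once these domination inequalities are in place, both conclusions follow by choosing $\gamma$ for $\{M_n(T)\}$ corresponding to the threshold $\delta$ divided by the accumulated constant and intersecting the finitely many projections arising from the four positive pieces, exactly as in the plain weighted case.
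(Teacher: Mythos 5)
Your reduction of the weighted averages to the plain averages $M_n(T)$ --- splitting $\beta$ and $x$ into finitely many positive pieces, using $0\le \tfrac1n\sum_{k}\beta_k T^k(y)\le B\,M_n(T)(y)$ for $0\le\beta_k\le B$ and $y\ge0$, and rewriting the subsequential average as $\tfrac{k_{n-1}+1}{n}$ times an average of length $k_{n-1}+1$ with $\sup_n\tfrac{k_{n-1}+1}{n}<\infty$ by Remark \ref{r31} --- does yield a correct proof of the \emph{bilateral} half: $0\le A\le B$ gives $0\le eAe\le eBe$ and hence $\|eAe\|_\infty\le\|eBe\|_\infty$, so the b.u.e.m.\ conclusion for $1\le p<\infty$ follows from Proposition \ref{p21} with the constants you describe. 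This is a genuinely different route from the paper, which performs no decomposition at all and instead quotes the weighted maximal ergodic theorems of Chilin--Litvinov (Theorems 2.1 and 2.3 of \cite{cl1}) directly for $M_n^\beta$ with bounded $\beta$; your version trades that external input for a domination argument and worse constants.

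The gap is in the one-sided (u.e.m., $2\le p<\infty$) half. There you need $0\le A\le B$ to imply $\|Ae\|_\infty\le C\|Be\|_\infty$, and this is false: $\|Ae\|_\infty^2=\|eA^2e\|_\infty$, and $A\le B$ does not give $A^2\le B^2$ because squaring is not operator monotone. Concretely, take $B=\mathrm{diag}(1,t)$, $A=\begin{pmatrix}1/2&c\\ \bar c&t/2\end{pmatrix}$ with $|c|^2=t/4$, and $e$ the projection onto the first coordinate: then $0\le A\le B$ and $\|Be\|_\infty=1$, while $\|Ae\|_\infty^2=1/4+t/4\to\infty$ as $t\to\infty$. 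So the column estimate for $M_n^\beta(T)(x)e$ cannot be extracted from the one for $M_n(T)(y_j)e$ by positivity alone, and the u.e.m.\ statement does not follow from Proposition \ref{p21} plus domination. The paper avoids this by invoking the one-sided weighted maximal inequality of Theorem 2.3 of \cite{cl1}, which is established there by different means (essentially an $L_{p/2}$-type reduction of $\|\cdot\,e\|_\infty^2$ to a bilateral estimate); to repair your argument you would need to cite that result or reproduce such a reduction rather than rely on operator monotonicity.
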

\begin{proof}
We will prove that the sequences are b.u.e.m. at zero for $1\leq p<\infty$. The u.e.m. at zero claim follows similarly by using Theorem 2.3 of \cite{cl1} below instead of Theorem 2.1 of \cite{cl1}.

We will first prove the assertion for $\{M_n^\beta(T)\}_{n=1}^{\infty}$. To that end, fix $\epsilon,\delta>0$. Since 
$\beta\in\ell_\infty(\mathbb{C})$, there exists $C>0$ such that $|\beta_j|\leq C$ for each $j\geq0$.

Fix $1\leq p<\infty$, and assume $x\in L_p$. By the maximal ergodic theorem given in Theorem 2.1 of \cite{cl1}, there exists $e\in \mathcal{P}(\mathcal{M})$ with
$$\tau(e^{\perp})\leq4\left(\frac{48C\|x\|_p}{\delta}\right)^p\ \text{ and }\ \sup_{n\geq1}\|eM_n^\beta(x)e\|_\infty\leq\delta.$$
Let $\gamma=\dfrac{\epsilon^{1/p}\delta}{4^{1/p}\cdot 48C}$ and assume further that $\|x\|_p<\gamma$. Then we still have $\sup_{n\geq1}\|eM_n^\beta(x)e\|_\infty\leq\delta$, while
\[
\tau(e^{\perp})\leq\left(\frac{4^{1/p}\cdot48C}{\delta}\|x\|_p\right)^p\leq\epsilon,
\]
implying that $\{M_n^\beta(T)\}_{n=1}^{\infty}$ is b.u.e.m. at zero on $(L_p,\|\cdot\|_p)$.

For each $j\geq0$, let $c_j=\chi_\textbf{k}(j)$. Then $|c_j|\leq1$ for every $j\geq0$, so $c\beta=\{c_j\beta_j\}_{j=0}^{\infty}$, is a bounded sequence. If $x\in L_p(\mathcal{M},\tau)$, we have
\[
\begin{split}
M_n^{\beta,\textbf{k}}(T)(x)&=\frac{1}{n}\sum_{j=0}^{n-1}\beta_{k_j}T^{k_j}(x)=\frac{k_{n-1}+1}{n}\frac{1}{k_{n-1}+1}\sum_{j=0}^{k_{n-1}}c_j\beta_jT^{j}(x)\\
&=\frac{k_{n-1}+1}{n}M_{k_{n-1}+1}^{c\beta}(T)(x).
\end{split}
\]
Replacing $\beta$ in the first part of the proof with $c\beta$, we see that $\{M_n^{c\beta}(T)\}_{n=1}^{\infty}$ is b.u.e.m. at zero on $(L_p,\|\cdot\|_p)$.

In view of Remark \ref{r31}, let $K=\sup_{n\geq1}\frac{k_n}{n}$. Assume $\epsilon,\delta>0$, and let $\gamma>0$ be the associated value for $\{M_n^{c\beta}(T)\}_{n=1}^{\infty}$ in the definition of b.u.e.m. at zero with respect to $\epsilon$ and $\frac{\delta}{K}$. Let $x\in L_p(\mathcal{M},\tau)$ be such that $\|x\|_p<\gamma$. Then there exists $e\in \mathcal{P}(\mathcal{M})$ such that $\tau(e^{\perp})\leq\epsilon$ and $\sup_{n\geq1}\|eM_n^{c\beta}(T)(x)e\|_\infty\leq\frac{\delta}{K}$. With this projection $e$,
we also find that
\begin{align*}
\sup_{n\geq1}\|eM_n^{\beta,\textbf{k}}(T)(x)e\|_\infty
&=\sup_{n\geq1}\left\{\frac{k_{n-1}+1}{n}\,\|eM_{k_{n-1}+1}^{c\beta}(T)(x)e\|_\infty\right\}\\
&\leq  K\sup_{n\geq1}\|eM_{n}^{c\beta}(T)(x)e\|_\infty \leq K\frac{\delta}{K}=\delta,
\end{align*}
hence $\{M_n^{\beta,\textbf{k}}(T)\}_{n=1}^{\infty}$ is b.u.e.m. at zero on $(L_p,\|\cdot\|_p)$.
\end{proof}

\medskip

The proof of the following lemma can be found in Lemma 3 of \cite{lm}, where, the result is proven for the b.a.u. convergence case, but a close examination of the proof shows that the same proof will also work for a.u. convergence with appropriate modifications.

\begin{lemma}\label{l31}
If a sequence $\{x_n\}_{n=1}^{\infty}$ in $\mathcal{M}$ is such that for every $\epsilon>0$ there are an a.u. (b.a.u.) convergent sequence $\{y_n\}_{n=1}^{\infty}\subset \mathcal{M}$ and a positive integer $N$ satisfying $\|x_n-y_n\|_\infty<\epsilon$ for all $n\geq N$, then $\{x_n\}_{n=1}^{\infty}$ converges a.u. (respectively, b.a.u.).
\end{lemma}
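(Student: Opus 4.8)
The plan is to reduce a.u.\ (b.a.u.) convergence of $\{x_n\}$ to a Cauchy condition that can be checked on a single large projection, and then to recover an honest limit by passing through the measure topology, in which $L_0$ is complete. I will describe the a.u.\ case; the b.a.u.\ case is identical once each one-sided product $(\,\cdot\,)e$ is replaced by the two-sided product $e(\,\cdot\,)e$. First I would fix a target $\delta>0$ and, for each $k\in\mathbb{N}$, apply the hypothesis with $\epsilon=1/k$ to obtain an a.u.\ convergent sequence $\{y_n^{(k)}\}_n\subset\mathcal{M}$ together with an integer $N_k$ such that $\|x_n-y_n^{(k)}\|_\infty<1/k$ for all $n\geq N_k$. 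Since $\{y_n^{(k)}\}_n$ converges a.u., the definition supplies a projection $e_k$ with $\tau(e_k^\perp)\leq 2^{-k}\delta$ for which $\{y_n^{(k)}e_k\}_n$ converges, hence is Cauchy, in $\|\cdot\|_\infty$. Setting $e=\bigwedge_k e_k$ gives $\tau(e^\perp)\leq\sum_k\tau(e_k^\perp)\leq\delta$, and $e\leq e_k$ yields $e=e_ke$ for every $k$.

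Next I would show that $\{x_ne\}_n$ is Cauchy in $\|\cdot\|_\infty$. For a fixed $k$ and $n,m\geq N_k$, the triangle inequality gives
\[
\|(x_n-x_m)e\|_\infty\leq\|(x_n-y_n^{(k)})e\|_\infty+\|(y_n^{(k)}-y_m^{(k)})e\|_\infty+\|(y_m^{(k)}-x_m)e\|_\infty,
\]
where the two outer terms are each below $1/k$, while the middle term, using $e=e_ke$, is at most $\|(y_n^{(k)}-y_m^{(k)})e_k\|_\infty$, which tends to $0$ as $n,m\to\infty$ because $\{y_n^{(k)}e_k\}_n$ is norm-Cauchy. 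Hence $\limsup_{n,m}\|(x_n-x_m)e\|_\infty\leq 2/k$ for every $k$, so $\{x_ne\}_n$ is $\|\cdot\|_\infty$-Cauchy. Since such an $e$ can be produced for every $\delta>0$, the sequence $\{x_n\}$ is Cauchy in the measure topology, and completeness of $L_0$ then delivers $x\in L_0$ with $x_n\to x$ in measure.

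Finally I would identify the limit and upgrade to a.u.\ convergence. With $\delta$ and $e$ as above, completeness of $\mathcal{M}$ makes $\{x_ne\}_n$ converge in $\|\cdot\|_\infty$ to some $z\in\mathcal{M}$; since $x_n\to x$ in measure and multiplication by the fixed operator $e$ is measure-continuous, $x_ne\to xe$ in measure, while $x_ne\to z$ in norm and hence in measure, so uniqueness of measure limits forces $z=xe$. Therefore $\|(x_n-x)e\|_\infty=\|x_ne-z\|_\infty\to0$ with $\tau(e^\perp)\leq\delta$, which is precisely a.u.\ convergence of $\{x_n\}$ to $x$. I expect the main obstacle to be exactly this last step's coherence: making sure that the projection-dependent norm-limits $z$ (one for each $\delta$) are all restrictions of a single $x\in L_0$. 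This is what makes it worthwhile to route the argument through the measure topology—producing $x$ once via completeness of $L_0$ and then re-deriving $z=xe$—rather than attempting to glue the local limits together directly.
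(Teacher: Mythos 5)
Your proof is correct, and it is essentially the standard argument that the paper itself defers to (Lemma 3 of Litvinov--Mukhamedov): approximate by the $1/k$-close a.u.\ convergent sequences, intersect the countably many projections $e_k$ with geometrically small $\tau(e_k^\perp)$, run the three-term triangle inequality to get that $\{x_ne\}_n$ is $\|\cdot\|_\infty$-Cauchy, and invoke completeness to produce the limit. Your only (harmless) variation is identifying the limit via Cauchyness in the measure topology and completeness of $L_0$ there, rather than citing completeness of $L_0$ with respect to a.u.\ (b.a.u.) convergence directly, as the paper does elsewhere via Theorem 2.3 of \cite{cls}.
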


We will also need the following simple fact.

\begin{lemma}\label{l32}
If $\mathbb{C}\ni\lambda_n\to\lambda$ and $L_0\ni x_n\to\widehat{x}\in L_0$ a.u. (b.a.u.), then $\lambda_n x_n\to\lambda\widehat{x}$ a.u. (respectively, b.a.u.)
\end{lemma}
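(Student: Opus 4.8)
The plan is to prove Lemma~\ref{l32} by reducing it to the already-established convergence of $\{x_n\}$ and the elementary fact that a convergent scalar sequence is bounded. First I would write the difference in a form that isolates two sources of deviation, namely
\[
\lambda_n x_n - \lambda\widehat{x} = \lambda_n(x_n-\widehat{x}) + (\lambda_n-\lambda)\widehat{x}.
\]
The strategy is to control each summand on a single projection $e$ and then combine. Since $\lambda_n\to\lambda$ in $\mathbb{C}$, the sequence $\{\lambda_n\}$ is bounded, say $|\lambda_n|\leq M$ for all $n$; this bound will tame the first term uniformly in $n$.

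For the a.u.\ case, fix $\epsilon>0$ and use the hypothesis $x_n\to\widehat{x}$ a.u.\ to select $e\in\mathcal{P}(\mathcal{M})$ with $\tau(e^\perp)\leq\epsilon$ and $\|(x_n-\widehat{x})e\|_\infty\to0$. With this same $e$, I would estimate
\[
\|(\lambda_n x_n-\lambda\widehat{x})e\|_\infty \leq |\lambda_n|\,\|(x_n-\widehat{x})e\|_\infty + |\lambda_n-\lambda|\,\|\widehat{x}e\|_\infty \leq M\|(x_n-\widehat{x})e\|_\infty + |\lambda_n-\lambda|\,\|\widehat{x}e\|_\infty.
\]
The first term tends to $0$ because $\|(x_n-\widehat{x})e\|_\infty\to0$ and $M$ is a fixed constant; the second tends to $0$ because $|\lambda_n-\lambda|\to0$ while $\|\widehat{x}e\|_\infty$ is a fixed finite number (note $\widehat{x}e\in\mathcal{M}$ since $e$ cuts $\widehat{x}$ into the bounded part of its domain). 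Hence $\|(\lambda_n x_n-\lambda\widehat{x})e\|_\infty\to0$, which is precisely a.u.\ convergence with the same projection $e$ and the same $\epsilon$.

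The b.a.u.\ case is entirely parallel: one chooses $e$ with $\tau(e^\perp)\leq\epsilon$ and $\|e(x_n-\widehat{x})e\|_\infty\to0$, then uses the identical splitting sandwiched by $e$ on both sides together with $\|e\,\widehat{x}\,e\|_\infty\leq\|\widehat{x}e\|_\infty<\infty$ and the submultiplicativity $\|e(x_n-\widehat{x})e\|_\infty$ already being controlled. I do not anticipate a genuine obstacle here; the only point requiring mild care is confirming that $\|\widehat{x}e\|_\infty$ (respectively $\|e\widehat{x}e\|_\infty$) is finite so that the scalar factor $|\lambda_n-\lambda|$ actually drives that term to zero. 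This is guaranteed because the very projection furnished by the a.u.\ (b.a.u.) convergence of $\{x_n\}$ restricts $\widehat{x}$ to a bounded operator in $\mathcal{M}$, so each summand is a genuine $\|\cdot\|_\infty$-estimate in $\mathcal{M}$ and the argument closes.
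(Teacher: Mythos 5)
Your proof is correct and takes essentially the same approach as the paper's: both add and subtract a cross term to split $\lambda_n x_n-\lambda\widehat{x}$ into two pieces and estimate each in $\|\cdot\|_\infty$ on the single projection $e$ furnished by the a.u.\ (b.a.u.)\ hypothesis. The only cosmetic difference is the choice of cross term --- you use $\lambda_n\widehat{x}$ (requiring $\{\lambda_n\}$ bounded and $\|\widehat{x}e\|_\infty<\infty$), while the paper uses $\lambda x_n$ (requiring $\{\|x_ne\|_\infty\}$ bounded) --- and these amount to the same observations.
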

\begin{proof}
Assume $\epsilon>0$. Let $e\in\mathcal{P}(\mathcal{M})$ be such that $\tau(e^{\perp})\leq\epsilon$ and $\|(x_n-\widehat{x})e\|_\infty\to0$. It then follows that the sequence $\{\|x_ne\|_\infty\}_{n}$ is bounded, and we obtain
\[\begin{split}
\|(\lambda_nx_n-\lambda\widehat{x})e\|_\infty
&\leq\|(\lambda_n-\lambda)x_ne\|_\infty+\|\lambda(x_n-\widehat{x}e)\|_\infty \\
&=|\lambda_n-\lambda|\|x_ne\|_\infty+|\lambda|\|(x_n-\widehat{x})e\|_\infty\to0
\end{split}\]
as $n\to\infty$.
\end{proof}

The following is a noncommutative extension of Proposition 1.7 in \cite{bl}.

\begin{proposition}\label{p32} Assume $\{k_j\}_{j=0}^{\infty}$ has density $d>0$, and let $\beta=\{\beta_j\}_{j=0}^{\infty}\subset\mathbb{C}$. Define $\gamma_i=1$ if $i=k_j$ for some $j$ and $c_i=0$ otherwise, and denote $\gamma\beta=\{\gamma_j\beta_j\}_{j=0}^{\infty}$. Let $1\leq p<\infty$, $T\in DS^+$, and $x\in L_p$. Then the averages $M_n^{\beta,\textbf{k}}(T)(x)$ converge a.u. (b.a.u.) to $\widehat{x}\in L_0$ if and only if the averages $M_n^{\gamma\beta}(T)(x)$ converge a.u. (respectively b.a.u.) to $d\widehat{x}$.
\end{proposition}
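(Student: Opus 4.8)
The plan is to establish the equivalence by relating the two families of averages through an exact algebraic identity and then showing that the correction terms vanish in measure. The key observation, already visible in the proof of Proposition \ref{p31}, is that the subsequential weighted average $M_n^{\beta,\textbf{k}}(T)(x)$ and the full weighted average of $\gamma\beta$ are essentially the same sum, reindexed. Writing $c_j = \gamma_j = \chi_{\textbf{k}}(j)$, we have
\[
M_n^{\beta,\textbf{k}}(T)(x)=\frac{k_{n-1}+1}{n}\,M_{k_{n-1}+1}^{\gamma\beta}(T)(x),
\]
exactly as in Proposition \ref{p31}. So the two averaging schemes differ only by the scalar factor $\frac{k_{n-1}+1}{n}$ and by the choice of indices along which we sample the sequence $\{M_m^{\gamma\beta}(T)(x)\}_m$.

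First I would invoke Remark \ref{r31}: since $\textbf{k}$ has density $d>0$, the scalar factor satisfies $\frac{k_{n-1}+1}{n}\to\frac1d$ as $n\to\infty$. This is the engine of the argument. For the forward direction, suppose $M_m^{\gamma\beta}(T)(x)\to d\widehat{x}$ a.u.\ (b.a.u.). Since $\{k_{n-1}+1\}_{n\geq1}$ is a strictly increasing sequence of positive integers, the subsequence $\{M_{k_{n-1}+1}^{\gamma\beta}(T)(x)\}_{n\geq1}$ inherits the same a.u.\ (b.a.u.) limit $d\widehat{x}$ (a.u.\ and b.a.u.\ convergence both pass to subsequences, as the defining projection $e$ works for the whole sequence and hence for any subsequence). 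Then Lemma \ref{l32}, applied with $\lambda_n=\frac{k_{n-1}+1}{n}\to\frac1d=\lambda$ and the convergent operator sequence, yields
\[
M_n^{\beta,\textbf{k}}(T)(x)=\frac{k_{n-1}+1}{n}\,M_{k_{n-1}+1}^{\gamma\beta}(T)(x)\longrightarrow \frac1d\cdot d\widehat{x}=\widehat{x}
\]
a.u.\ (b.a.u.), which is exactly the desired conclusion.

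For the converse, suppose $M_n^{\beta,\textbf{k}}(T)(x)\to\widehat{x}$ a.u.\ (b.a.u.). Rearranging the identity gives $M_{k_{n-1}+1}^{\gamma\beta}(T)(x)=\frac{n}{k_{n-1}+1}\,M_n^{\beta,\textbf{k}}(T)(x)$, and since $\frac{n}{k_{n-1}+1}\to d$, Lemma \ref{l32} again gives $M_{k_{n-1}+1}^{\gamma\beta}(T)(x)\to d\widehat{x}$ along the index set $\{m=k_{n-1}+1:n\geq1\}$. The difficulty here is that this only controls $M_m^{\gamma\beta}(T)(x)$ along a subsequence of the integers $m$, not along all $m$. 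I would close this gap by using the density hypothesis to show that consecutive sampled indices $k_{n-1}+1$ and $k_n+1$ are close relative to $m$, and that $M_m^{\gamma\beta}(T)(x)$ varies little between them. Concretely, for $m$ between $k_{n-1}+1$ and $k_n+1$, the sums defining $M_m^{\gamma\beta}(T)(x)$ and $M_{k_{n-1}+1}^{\gamma\beta}(T)(x)$ differ by a bounded number of terms $c_j\beta_j T^j(x)$ relative to $m$, since density $d>0$ forces the gaps $k_n-k_{n-1}$ to be $o(m)$ on average (indeed $\frac{k_n}{n}\to\frac1d$ controls the growth). The main obstacle will be making this interpolation rigorous in the a.u./b.a.u.\ setting. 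The cleanest route is to approximate $x$ by an element $x'\in L_p\cap\mathcal{M}$ with $\|x-x'\|_p$ small, control the tail contributions on $\mathcal{M}$ uniformly in norm using the contractivity of $T$ and boundedness of $\gamma\beta$, and then apply Lemma \ref{l31} to transfer a.u./b.a.u.\ convergence from the sampled subsequence to the full sequence; the b.u.e.m.\ at zero property from Proposition \ref{p31} handles the error introduced by the approximation $x\mapsto x'$.
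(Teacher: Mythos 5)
Your first direction (deducing the convergence of $M_n^{\beta,\textbf{k}}(T)(x)$ from that of $M_m^{\gamma\beta}(T)(x)$) is exactly the paper's argument: the identity $M_n^{\beta,\textbf{k}}(T)(x)=\frac{k_{n-1}+1}{n}M_{k_{n-1}+1}^{\gamma\beta}(T)(x)$, the fact that a.u./b.a.u.\ convergence passes to subsequences, $\frac{k_{n-1}+1}{n}\to\frac1d$, and Lemma \ref{l32}. The problem is the other direction, where you correctly identify the gap --- you only control $M_m^{\gamma\beta}(T)(x)$ along the sampled indices $m=k_{n-1}+1$ --- but the repair you sketch misses the key observation. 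The missing idea is the counting function $c(n)=\operatorname{card}(\textbf{k}\cap\{0,\dots,n\})$. Since $\gamma_j=0$ for $j\notin\textbf{k}$, for \emph{every} $n$ one has the exact identity
\[
M_n^{\gamma\beta}(T)(x)=\frac1n\sum_{j=0}^{c(n-1)-1}\beta_{k_j}T^{k_j}(x)=\frac{c(n-1)}{n}\,M_{c(n-1)}^{\beta,\textbf{k}}(T)(x);
\]
between consecutive sampled indices the sums do not ``differ by a bounded number of terms'' as you suggest --- they are literally the same sum with a different normalization, so no interpolation estimate is needed at all. Since $\frac{c(n-1)}{n}\to d$ by the density hypothesis, $c(n)\to\infty$, and $\{M_{c(n-1)}^{\beta,\textbf{k}}(T)(x)\}_{n}$ therefore converges a.u.\ (b.a.u.) to $\widehat{x}$ (the projection witnessing convergence of the full sequence works for this reindexed sequence), Lemma \ref{l32} immediately gives $M_n^{\gamma\beta}(T)(x)\to d\widehat{x}$ for the full sequence.

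Beyond being unnecessary, your proposed patch does not fit the statement as given: Proposition \ref{p32} assumes only $\beta\subset\mathbb{C}$, with no boundedness, so the $\|\cdot\|_\infty$ tail estimates and the appeal to Proposition \ref{p31} (which requires $\beta\in\ell_\infty$) are unavailable; moreover the proposition is an equivalence for a \emph{fixed} $x\in L_p$, so a density-plus-equicontinuity argument, which upgrades convergence from a dense subset to all of $L_p$, does not address the individual $x$ in question.
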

\begin{proof}
Assume that the averages $M_n^{\beta,\textbf{k}}(T)(x)$ converge a.u. (b.a.u.) to $\widehat{x}$. Assume, without loss of generality, that $k_0=0$, and let $c(n)=\operatorname{card}(\{k_j:j\in\mathbb{N}_0\}\cap\{0,...,n\})$. Observe that $c(n)=\max\{j:k_j\leq n\}+1$, $n\in\mathbb{N}_0$. It follows that, since
\[
\begin{split}
M_n^{\gamma\beta}(T)(x)&=\frac1n\sum_{j=0}^{n-1}\gamma_j\beta_jT^j(x)=\frac1n\sum_{j=0}^{c(n-1)-1}\beta_{k_j}T^{k_j}(x)\\
&=\frac{c(n-1)}n\frac1{c(n-1)}\sum_{j=0}^{c(n-1)-1}\beta_{k_j}T^{k_j}(x)=\frac{c(n-1)}{n} M_{c(n-1)}^{\beta,\textbf{k}}(T)(x),
\end{split}
\]
$\frac{c(n-1)}{n}\to d$, and the subsequence $\{M_{c(n-1)}^{\beta,\textbf{k}}(T)(x)\}_{n=1}^{\infty}$ converges a.u. (b.a.u.) to $\widehat{x}$, Lemma 3.2 implies that the sequence $\{M_n^{\gamma\beta}(T)(x)\}_{n=1}^{\infty}$ converges a.u. (b.a.u.) to $d\widehat{x}$.

Similarly,
\[
\begin{split}
M_n^{\beta,\textbf{k}}(T)(x)&=\frac{1}{n}\sum_{j=0}^{n-1}\beta_{k_j}T^{k_j}(x)=\frac{1}{n}\sum_{j=0}^{k_{n-1}}c_j\beta_jT^j(x)\\
&=\frac{k_{n-1}+1}{n}\frac1{k_{n-1}+1}\sum_{j=0}^{k_{n-1}}c_j\beta_jT^j(x)=\frac{k_{n-1}+1}n M_{k_{n-1}+1}^{\gamma\beta}(T)(x),
\end{split}
\]
implies the converse.
\end{proof}

\medskip

\begin{theorem}\label{t31} Let $\textbf{k}=\{k_j\}_{j=0}^{\infty}$ have density $1$, $\beta=\{\beta_j\}_{j=0}^\infty\in\ell_\infty(\mathbb{C})$, and $2\leq p<\infty$ ($1\leq p<\infty$). If $T\in DS^+$ and $\beta$ is a.u.- (respectively, b.a.u.-) good for $T$ on $L_p(\mathcal{M},\tau)$, then $\textbf{k}$ is a.u.- (respectively, b.a.u.-) good for $T$ and $\beta$ on $L_p(\mathcal{M},\tau)$. Moreover, for every $x\in L_p$, the sequence $\{M_n^{\beta,\textbf{k}}(T)(x)\}_{n=1}^{\infty}$ converges a.u. (respectively, b.a.u.) to some $\widehat{x}\in L_p$.
\end{theorem}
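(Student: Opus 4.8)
The plan is to exploit the equicontinuity established in Proposition \ref{p31} together with the closedness criterion of Proposition \ref{p22} to reduce the convergence problem to a dense subalgebra of $L_p$, and then to verify convergence on that subalgebra by comparing the subsequential weighted averages with the full weighted averages $M_n^\beta(T)(x)$, which converge by hypothesis. First I would note that density $1$ implies lower density $1>0$, so Proposition \ref{p31} applies and $\{M_n^{\beta,\textbf{k}}(T)\}_{n=1}^\infty$ is b.u.e.m. at zero on $(L_p,\|\cdot\|_p)$ for $1\le p<\infty$ and u.e.m. at zero for $2\le p<\infty$. By Proposition \ref{p22}, the set $C=\{x\in L_p:\{M_n^{\beta,\textbf{k}}(T)(x)\}\text{ converges b.a.u.\ (a.u.)}\}$ is then closed in $L_p$. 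Since $L_p\cap\mathcal{M}$ is dense in $L_p$, it suffices to show $L_p\cap\mathcal{M}\subseteq C$, i.e.\ that the averages converge for every bounded $x$.

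Next I would fix $x\in L_p\cap\mathcal{M}$ and pass through Proposition \ref{p32}. With $\gamma_j=\chi_{\textbf{k}}(j)$ and $d=1$, that proposition reduces the convergence of $\{M_n^{\beta,\textbf{k}}(T)(x)\}$ to that of $\{M_n^{\gamma\beta}(T)(x)\}$, with the same limit. The key observation is that $M_n^{\gamma\beta}(T)(x)$ and $M_n^\beta(T)(x)$ differ only in the terms indexed by the complement $\textbf{k}^c=\mathbb{N}_0\setminus\textbf{k}$, which has density $0$. Writing $C_0=\sup_j|\beta_j|$ and using that $T$ contracts $\mathcal{M}$ (so $\|T^j(x)\|_\infty\le\|x\|_\infty$), I would record the operator-norm estimate
\[
\|M_n^{\beta}(T)(x)-M_n^{\gamma\beta}(T)(x)\|_\infty=\Big\|\frac1n\sum_{\substack{0\le j<n\\ j\in\textbf{k}^c}}\beta_jT^j(x)\Big\|_\infty\le C_0\|x\|_\infty\,\frac{\operatorname{card}(\textbf{k}^c\cap\{0,\dots,n-1\})}{n}\longrightarrow0.
\]
Since $\beta$ is a.u.- (b.a.u.-) good for $T$ on $L_p$, the sequence $\{M_n^\beta(T)(x)\}\subset\mathcal{M}$ converges a.u.\ (b.a.u.), so applying Lemma \ref{l31} with $y_n=M_n^\beta(T)(x)$ and $x_n=M_n^{\gamma\beta}(T)(x)$ shows that $\{M_n^{\gamma\beta}(T)(x)\}$ converges a.u.\ (b.a.u.). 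By Proposition \ref{p32}, $\{M_n^{\beta,\textbf{k}}(T)(x)\}$ then converges a.u.\ (b.a.u.), so $x\in C$.

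Having shown $C\supseteq L_p\cap\mathcal{M}$ and that $C$ is closed, I would conclude $C=L_p$, which proves $\textbf{k}$ is a.u.- (b.a.u.-) good for $T$ and $\beta$. For the ``$\widehat{x}\in L_p$'' clause, I would observe that $\|M_n^{\beta,\textbf{k}}(T)(x)\|_p\le C_0\|x\|_p$, so the averages are bounded in $L_p$; since a.u.\ (b.a.u.) convergence implies convergence in measure, the Fatou (lower semicontinuity) property of $\|\cdot\|_p$ under convergence in measure forces the limit $\widehat{x}$ into $L_p$. The main obstacle I expect is the comparison step: the operator-norm estimate above only works for bounded $x$, which is precisely why the passage through the dense subalgebra $L_p\cap\mathcal{M}$ and the equicontinuity/closedness machinery is indispensable. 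The density-zero hypothesis on $\textbf{k}^c$ is exactly what forces the Cesàro average of the discarded terms to vanish in $\|\cdot\|_\infty$, and keeping every average inside $\mathcal{M}$ is what makes Lemma \ref{l31} applicable.
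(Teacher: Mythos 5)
Your proof is correct and shares the paper's overall architecture --- verify convergence on a dense subalgebra of bounded operators via an operator-norm comparison that exploits density one, transfer the a.u.\ (b.a.u.) convergence with Lemma \ref{l31}, extend to all of $L_p$ with Propositions \ref{p31} and \ref{p22}, and place the limit in $L_p$ by closedness of balls in the measure topology --- but the dense-subspace step is routed differently. The paper introduces the auxiliary averages $A_n(T)(x)=\frac{1}{k_n}\sum_{j=0}^{n-1}\beta_{k_j}T^{k_j}(x)$, compares them in $\|\cdot\|_\infty$ with the subsequence $M_{k_n}^{\beta}(T)(x)$ (the discarded terms being those indexed by $\textbf{k}^c\cap[0,k_n)$), and then rescales by $k_n/n\to1$ via Lemma \ref{l32}. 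You instead compare the full averages $M_n^{\beta}(T)(x)$ with $M_n^{\gamma\beta}(T)(x)$ --- exactly the estimate underlying Theorem \ref{t32} for the pair $\beta$, $\gamma\beta$ --- and then invoke Proposition \ref{p32} with $d=1$ to return to $M_n^{\beta,\textbf{k}}(T)(x)$, so that Proposition \ref{p32} absorbs the rescaling the paper does by hand. This is precisely the combination the paper flags in Remark \ref{r32}, and since both of your intermediate reductions are reversible, your argument yields the converse implication at no extra cost. Because you re-derive the comparison estimate directly rather than citing Theorem \ref{t32} (which is stated after Theorem \ref{t31}), no circularity arises; the only cosmetic difference is your use of the dense set $L_p\cap\mathcal{M}$ in place of the paper's $L_1\cap\mathcal{M}$, both of which the preliminaries license.
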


\begin{proof}
We will prove the a.u. case, as the b.a.u. case will follow the same line of proof with the appropriate modifications. As such, assume $2\leq p<\infty$.


Assume $x\in L_1\cap\mathcal{M}$. Then, since $\beta$ is a.u.-good for $T$ on $L_p(\mathcal{M},\tau)$ and since $L_1\cap\mathcal{M}\subset L_p$, we see that the sequence $\{M_n^\beta(T)(x)\}_{n=1}^{\infty}$, hence the subsequence $\{M_{k_n}^{\beta}(T)(x)\}_{n=1}^{\infty}$, converges a.u.

Let $A_n(T)(x)=\frac{1}{k_n}\sum_{j=0}^{n-1}\beta_{k_j}T^{k_j}(x),$ so that $M_n^{\beta,\textbf{k}}(T)(x)=\frac{k_n}{n}A_n(T)(x)$. Since $T$ is a contraction on $\mathcal{M}$, we see that $\|T^j(x)\|_\infty\leq\|x\|_\infty$ for every $j\geq0$. Let $\|\beta\|_{\ell_\infty}=\sup_{j\geq0}|\beta_j|$. Then, for each $n\geq1$,
\begin{align*}
\left\|M_{k_n}^{\beta}(T)(x)-A_n(T)(x)\right\|_\infty
&=\left\|\frac{1}{k_n}\sum_{j=0,\; j\not\in\textbf k}^{k_n-1}\beta_jT^j(x)\right\|_\infty \\
&\leq\frac{1}{k_n}\sum_{j=0,\; j\not\in\textbf k}^{k_n-1}|\beta_j|\|T^j(x)\|_\infty \\
&\leq\frac{1}{k_n}\sum_{j=0,\; j\not\in\textbf k}^{k_n-1}\|\beta\|_{\ell_\infty}\|x\|_\infty
= \frac{k_n-n}{k_n}\|\beta\|_{\ell_\infty}\|x\|_\infty.
\end{align*}
Let $\epsilon>0$. Since $\frac{k_n-n}{k_n}\to0$ as $n\to\infty$ (see Remark \ref{r31}), there exists $N\in\mathbb N$ such that $n\geq N$ implies 
\[
\|M_{k_n}^{\beta}(T)(x)-A_n(T)(x)\|_\infty<\epsilon.
\] 
Thus, it follows by Lemma 3.1 that the sequence $\{A_n(T)(x)\}_{n=1}^{\infty}$ converges a.u. since $\{M_{k_n}^{\beta}(T)(x)\}_{n=1}^{\infty}$ does.

Since $L_0$ is complete with respect to a.u. (and b.a.u.) convergence (see Theorem 2.3 and Remark 2.4 of \cite{cls}), there exists $\widehat{x}\in L_0$ such that $A_n(T)(x)\to\widehat x$ a.u. Therefore, since $\frac{k_n}{n}\to1$ (Remark \ref{r31}) and $M_n^{\beta,\textbf{k}}(T)(x)=\frac{k_n}{n}A_n(T)(x)$, we conclude, by Lemma \ref{l32}, that $\{M_n^{\beta,\textbf{k}}(T)(x)\}_{n=1}^{\infty}$ converges a.u. for every $x\in L_1\cap\mathcal{M}$.

As $\{\frac{k_n}{n}\}_{n=1}^{\infty}$ and $\beta$ are a bounded sequences, $\{M_n^{\beta,\textbf{k}}(T)\}_{n=1}^{\infty}$ is u.e.m. at zero on $(L_p,\|\cdot\|_p)$ by Proposition \ref{p31}. This in turn implies,  by Proposition \ref{p22}, that the set
\[
C=\{x\in L_p(\mathcal{M},\tau):\{M_n^{\beta,\textbf{k}}(T)(x)\}_{n=1}^{\infty}\ \text{ converges a.u.}\}
\]
is closed in $L_p$. Since $L_1\cap \mathcal{M}$ is dense in $L_p$ it follows that $L_p=C$.

Therefore, given $x\in L_p$, there exists $\widehat{x}\in L_0$ such that $M_n^{\beta,\textbf{k}}(T)(x)\to\widehat{x}$ a.u., hence in measure. Since the unit ball in $L_p$ is closed with respect to the measure topology (see, for example, Theorem 1.2 in \cite{cls}) and $\|M_n^{\beta,\textbf{k}}(T)(x)\|_p\leq\|\beta\|_{\ell_\infty}\|x\|_p$ for all $n$, we conclude that $\widehat{x}\in L_p$.
\end{proof}

If $\beta=\textbf{1}$, this establishes the following.

\begin{corollary}\label{c31}
If $\textbf{k}=\{k_j\}_{j=0}^{\infty}$ has density $1$ and $x\in L_p$, then the subsequential averages $M_n^{\textbf{k}}(T)(x)$ converge b.a.u. (a.u.) to some $\widehat{x}\in L_p$ if $1\leq p<\infty$ (respectively, if $2\leq p<\infty$).
\end{corollary}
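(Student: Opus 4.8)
The plan is to obtain this statement as an immediate specialization of Theorem \ref{t31} to the constant weight $\beta=\textbf{1}=\{1\}_{j=0}^{\infty}$. The first thing I would observe is that, directly from the definitions, $M_n^{\textbf{1},\textbf{k}}(T)(x)=\frac{1}{n}\sum_{j=0}^{n-1}T^{k_j}(x)=M_n^{\textbf{k}}(T)(x)$, so that the subsequential averages appearing in the corollary are exactly the subsequential weighted averages of Theorem \ref{t31} for this choice of $\beta$.

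To apply Theorem \ref{t31} I must verify its two hypotheses on $\beta$. The boundedness requirement $\beta\in\ell_\infty(\mathbb{C})$ is trivial, since $\|\textbf{1}\|_{\ell_\infty}=1$. The remaining hypothesis is that $\textbf{1}$ be a.u.-good (respectively, b.a.u.-good) for $T$ on $L_p(\mathcal{M},\tau)$; this is precisely the content of Example \ref{e32}, which records, via the noncommutative individual ergodic theorem (Theorem \ref{t21}), that the unweighted averages $M_n^{\textbf{1}}(T)(x)=M_n(T)(x)$ converge a.u. for $2\leq p<\infty$ and b.a.u. for $1\leq p<\infty$, for every $T\in DS^+$. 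Hence both hypotheses of Theorem \ref{t31} are met.

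With these checks in place, and since $\textbf{k}$ is assumed to have density $1$ (matching the density hypothesis of Theorem \ref{t31}), the theorem applies directly and yields, for every $x\in L_p$, convergence of $\{M_n^{\textbf{k}}(T)(x)\}_{n=1}^{\infty}$ to some $\widehat{x}\in L_p$, a.u. when $2\leq p<\infty$ and b.a.u. when $1\leq p<\infty$. Since the whole argument is simply a matching of hypotheses, there is no genuine obstacle; the only point meriting care is confirming that Example \ref{e32} delivers exactly the ``good for $T$'' property in the correct range of $p$ for each mode of convergence, which it does.
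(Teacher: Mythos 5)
Your proof is correct and matches the paper's own argument: the paper derives Corollary \ref{c31} precisely by setting $\beta=\textbf{1}$ in Theorem \ref{t31}, with the goodness of $\textbf{1}$ supplied by Theorem \ref{t21} as recorded in Example \ref{e32}. Your additional verification that $M_n^{\textbf{1},\textbf{k}}(T)(x)=M_n^{\textbf{k}}(T)(x)$ and that $\textbf{1}\in\ell_\infty(\mathbb{C})$ is exactly the (implicit) hypothesis-matching the paper relies on.
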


Combining Theorem \ref{t31} with Theorem 3.1 of \cite{cl1}, we arrive at the following.

\begin{corollary}\label{c32}
Assume $\mathcal{M}$ has a separable predual. If $\textbf{k}$ has density $1$ and $\beta$ is a bounded Besicovich sequence, then, given $x\in L_p$, the subsequential weighted averages $M_n^{\beta,\textbf{k}}(T)(x)$ converge b.a.u. (a.u.) to some $\widehat{x}\in L_p$ if $1\leq p<\infty$ (respectively, if $2\leq p<\infty$.
\end{corollary}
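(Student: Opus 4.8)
The plan is to obtain this as an immediate consequence of Theorem \ref{t31}, after first extracting from the theory of bounded Besicovich sequences the two properties that Theorem \ref{t31} takes as hypotheses. The corollary is stated precisely so that these hypotheses are exactly what the cited results provide, so the proof should be essentially a one-line deduction.

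First I would record that a bounded Besicovich sequence $\beta$ is, by condition (i) of Example \ref{e31}, an element of $\ell_\infty(\mathbb{C})$; this supplies the boundedness requirement of Theorem \ref{t31}. Second, since $\mathcal{M}$ is assumed to have separable predual, the result recorded in Example \ref{e31} (namely Theorem 3.1 of \cite{cl1}) applies and tells us that $\beta$ is b.a.u.-good in $L_p(\mathcal{M},\tau)$ for every $1\leq p<\infty$ and a.u.-good in $L_p(\mathcal{M},\tau)$ for every $2\leq p<\infty$. In particular, for the fixed operator $T\in DS^+$ of the statement, $\beta$ is b.a.u.-good (respectively a.u.-good) for $T$ on $L_p$ in the corresponding range of $p$.

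With $\beta\in\ell_\infty(\mathbb{C})$, with $\textbf{k}$ of density $1$, and with $\beta$ being b.a.u.-good (a.u.-good) for $T$ on $L_p$, every hypothesis of Theorem \ref{t31} is satisfied. I would then invoke Theorem \ref{t31} directly to conclude that $\textbf{k}$ is b.a.u.-good (a.u.-good) for $T$ and $\beta$ on $L_p$, and that for each $x\in L_p$ the subsequential weighted averages $M_n^{\beta,\textbf{k}}(T)(x)$ converge b.a.u. (a.u.) to some $\widehat{x}\in L_p$, the a.u. assertion holding for $2\leq p<\infty$ and the b.a.u. assertion for $1\leq p<\infty$.

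Since the argument is a direct application of an already-proved theorem together with a cited result, there is no genuine obstacle; the substantive work resides entirely in Theorem \ref{t31} and in \cite{cl1}. The only point demanding care is the bookkeeping of index ranges: one must verify that the ranges in which $\beta$ is a.u.- versus b.a.u.-good as a Besicovich sequence coincide with the ranges in which Theorem \ref{t31} transfers that goodness to the subsequence $\textbf{k}$, which they do ($2\leq p<\infty$ for a.u.\ convergence, $1\leq p<\infty$ for b.a.u.\ convergence), and to note that the separable-predual assumption is inherited precisely because the Besicovich result of \cite{cl1} requires it.
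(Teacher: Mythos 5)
Your proposal is correct and matches the paper's own argument exactly: the paper derives this corollary by combining Theorem \ref{t31} with Theorem 3.1 of \cite{cl1} (as recorded in Example \ref{e31}), which is precisely the deduction you give.
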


Suppose $\alpha,\beta\in\ell_\infty(\mathbb{C})$ are such that there exists a set $E\subset\mathbb{N}_0$ with density $1$ such that $\alpha_j\neq\beta_j$ implies $j\in E^c$. Then, similar to the proof of Theorem 3.1, it follows that for every $x\in L_1\cap\mathcal{M}$ and $n\in\mathbb N$,
\begin{align*}
\|M_n^\alpha(T)(x)&-M_n^\beta(T)(x)\|_\infty
\leq\frac{1}{n}\sum_{j=0,\; j\not\in E}^{n-1}|\alpha_j-\beta_j|\|T^j(x)\|_\infty \\
&\leq\frac{n-\operatorname{card}(\{0,...,n-1\}\cap E)}{n}(\|\alpha\|_{\ell_\infty}+\|\beta\|_{\ell_\infty})\|x\|_\infty,
\end{align*}
which tends to $0$ as $n\to\infty$. Since both weighted averages are b.u.e.m. at zero on $L_p$ for $1\leq p<\infty$ and u.e.m. at zero on $L_p$ for $2\leq p<\infty$, an application of Lemma \ref{l31} and Proposition \ref{p22} then implies the following.

\medskip

\begin{theorem}\label{t32}
Let $\alpha,\beta\in\ell_\infty(\mathbb C)$ be such that there exists a set $E\subseteq\mathbb{N}_0$ with density $1$ such that $\alpha_j\neq\beta_j$ implies $j\in E^c$. Then for $2\leq p<\infty$ ($1\leq p<\infty$) and $T\in DS^+$, $\alpha$ is a.u.- (respectively b.a.u.-) good for $T$ on $L_p(\mathcal{M},\tau)$ if and only if $\beta$ is a.u.- (respectively b.a.u.-) good for $T$ on $L_p(\mathcal{M},\tau)$.
\end{theorem}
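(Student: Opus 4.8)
The plan is to exploit the symmetry of the hypothesis together with the equicontinuity machinery already assembled. Since the condition ``$\alpha_j\neq\beta_j$ implies $j\in E^c$'' is symmetric in $\alpha$ and $\beta$, it suffices to prove one implication: assuming $\alpha$ is a.u.-good for $T$ on $L_p$ (taking $2\leq p<\infty$; the b.a.u.\ case for $1\leq p<\infty$ is identical), I would show that $\beta$ is a.u.-good for $T$ on $L_p$, the reverse implication following by interchanging the roles of $\alpha$ and $\beta$.

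First I would work on the dense subspace $L_1\cap\mathcal{M}$. For $x\in L_1\cap\mathcal{M}$, the estimate established just above (as in the proof of Theorem \ref{t31}, using $\|T^j(x)\|_\infty\leq\|x\|_\infty$) gives
\[
\|M_n^\alpha(T)(x)-M_n^\beta(T)(x)\|_\infty\leq\frac{n-\operatorname{card}(\{0,\dots,n-1\}\cap E)}{n}\bigl(\|\alpha\|_{\ell_\infty}+\|\beta\|_{\ell_\infty}\bigr)\|x\|_\infty,
\]
and since $E$ has density $1$ the prefactor tends to $0$, so $\|M_n^\alpha(T)(x)-M_n^\beta(T)(x)\|_\infty\to0$. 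Because $\alpha$ is a.u.-good, the sequence $\{M_n^\alpha(T)(x)\}_{n=1}^\infty$ converges a.u.\ for every $x\in L_p$, in particular for $x\in L_1\cap\mathcal{M}$. Applying Lemma \ref{l31} with $x_n=M_n^\beta(T)(x)$ and the single a.u.-convergent comparison sequence $y_n=M_n^\alpha(T)(x)$ (valid for every $\epsilon>0$ since the norm difference tends to $0$), I conclude that $\{M_n^\beta(T)(x)\}_{n=1}^\infty$ converges a.u.\ for every $x\in L_1\cap\mathcal{M}$.

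To pass from the dense subspace to all of $L_p$, I would invoke the equicontinuity results exactly as in Theorem \ref{t31}. Since $\beta\in\ell_\infty(\mathbb{C})$, Proposition \ref{p31} shows that $\{M_n^\beta(T)\}_{n=1}^\infty$ is u.e.m.\ at zero on $(L_p,\|\cdot\|_p)$ for $2\leq p<\infty$, so by Proposition \ref{p22} the set $C=\{x\in L_p:\{M_n^\beta(T)(x)\}_{n=1}^\infty\text{ converges a.u.}\}$ is closed in $L_p$. The previous step gives $L_1\cap\mathcal{M}\subseteq C$, and since $L_1\cap\mathcal{M}$ is dense in $L_p$ we obtain $C=L_p$; that is, $\beta$ is a.u.-good for $T$ on $L_p$. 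Replacing u.e.m.\ by b.u.e.m.\ and a.u.\ by b.a.u.\ throughout handles the case $1\leq p<\infty$.

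The bulk of the argument is routine once this machinery is in place; the one point that genuinely requires care is that the sup-norm comparison estimate is available \emph{only} on $L_1\cap\mathcal{M}$, since it relies on $\|T^j(x)\|_\infty\leq\|x\|_\infty$ and hence on $x\in\mathcal{M}$, and has no direct analogue on all of $L_p$. This is precisely why the density-one hypothesis alone does not permit a naive approximation argument on the whole space, and why the equicontinuity-plus-closedness step (Propositions \ref{p31} and \ref{p22}) is essential for spreading the conclusion from the dense subspace to all of $L_p$.
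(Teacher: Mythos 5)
Your proposal is correct and follows essentially the same route as the paper: the sup-norm estimate on the dense subspace $L_1\cap\mathcal{M}$ combined with Lemma \ref{l31}, followed by Propositions \ref{p31} and \ref{p22} to transfer the convergence to all of $L_p$ by closedness. The only cosmetic difference is that you spell out the symmetry reduction explicitly, which the paper leaves implicit.
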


\begin{remark}\label{r32}
\textbf{1.} Theorem \ref{t32} together with Proposition \ref{p32} shows that the conclusion of Theorem 3.1 is actually an equivalence.

\textbf{2.} The proof of Theorem \ref{t31} also shows that the a.u. limit of $\{M_n^{\beta,\textbf{k}}(T)(x)\}_{n=1}^{\infty}$ is the same as that of $\{M_n^\beta(T)(x)\}_{n=1}^{\infty}$ whenever $x\in L_1\cap\mathcal{M}$. To see this, let $x\in L_1\cap\mathcal{M}$ and let $\widehat{x}\in L_0$ be the a.u. limit of $\{M_n^\beta(T)(x)\}_{n=1}^{\infty}$. Then, for any $\epsilon>0$ with corresponding projection $e$ from the definition of the a.u. convergence of $\{M_{k_n}^\beta(T)(x)\}_{n=1}^{\infty}$ to $\widehat{x}$, we find
$$\|(A_n(x)-\widehat{x})e\|_\infty\leq\|A_n^\beta(x)-M_{k_n}^\beta(T)(x)\|_\infty+\|(M_{k_n}^\beta(T)(x)-\widehat{x})e\|_\infty\xrightarrow{n\to\infty}0,$$ which shows that $\{A_n(x)\}_{n=1}^{\infty}$ shares its a.u. limit with $\{M_{k_n}^{\beta}(T)(x)\}_{n=1}^{\infty}$. The rest of the proof is similar to that of Theorem \ref{t31}. The same can be said for b.a.u. convergence, as well as the convergences in Theorem \ref{t32} for $x\in L_1\cap\mathcal{M}$.
\end{remark}

\section{Convergence along Uniform Sequences}

We will now extend the main result of \cite{lm} regarding the convergence of subsequential averages along uniform sequences to the general non-commutative $L_p$ setting for every $1\leq p<\infty$. Furthermore, we also show that we can replace the b.a.u. convergence as stated there with a.u. convergence for $2\leq p<\infty$, as well give a condition that allows one to add weights to these averages.

We will first give a very brief review of the terminology used in that paper, namely that relating to uniform sequences. See \cite{bk} and \cite{sat} for more details and properties of uniform sequences.

Let $X$ be a compact Hausdorff space, and let $\phi:X\to X$ be a continuous map such that the family $\{\phi^n\}_{n=0}^{\infty}$ is equicontinuous.

The pair $(X,\phi)$ is called \textit{strictly ergodic} if there exists a unique $\phi$-invariant measure $\mu$ on $(X,\mathcal{B}_X)$ (so that $\mu\circ\phi=\mu$), where $\mathcal{B}_X$ denotes the $\sigma$-algebra of all Borel subsets of $X$, such that $\operatorname{supp}(\mu)=X$ and such that for any $z\in X$ and $f\in C(X)$ we have
\[
\int_X f(t)d\mu(t)=\lim_{n\to\infty}\frac{1}{n}\sum_{k=0}^{n-1}f(\phi^k(z)),
\]
where $C(X)$ is the Banach space of continuous functions $f:X\to\mathbb{C}$ with the supremum norm, and the convergence above is with respect to that norm.

\smallskip

Fix a strictly ergodic system $(X,\mathcal{B}_X,\mu,\phi)$ and a set $Y\in \mathcal{B}_X$ such that $\mu(Y)>0=\mu(\partial Y)$, where $\partial Y$ denotes the boundary of $Y$. A sequence $\textbf{k}=\{k_j\}_{j=0}^{\infty}$ of non-negative integers is said to be \textit{uniform} if there exists a point $\omega_0\in X$ such that

$k_0=\min\{k\geq0:\phi^k(\omega_0)\in Y\},$

$k_n=\min\{k>k_{n-1}:\phi^k(\omega_0)\in Y\}, n\geq1.$

\noindent The triple $(X,\mathcal{B}_X,\mu,\phi)$, $Y$, and $\omega_0$ will be called the \textit{apparatus} for $\textbf{k}$ [BK].

\smallskip

For the rest of this section $\textbf{k}=\{k_j\}_{j=0}^{\infty}$ will be a fixed uniform sequence with the apparatus $(X,\mathcal{B}_X,\mu,\phi)$, $Y$, and $\omega_0$. Details of the next two lemmas can be found in \cite{sat}.

\begin{lemma}\label{l41}
If $\{k_j\}_{j=0}^{\infty}$ is a uniform sequence with the apparatus above, then
$$\lim_{n\to\infty}\frac{n}{k_n}=\mu(Y).$$
\end{lemma}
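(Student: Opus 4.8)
The plan is to reduce the statement to a counting argument governed by the ergodic averages supplied by strict ergodicity, using the hypothesis $\mu(\partial Y)=0$ to replace the discontinuous indicator $\chi_Y$ by continuous test functions. For $N\in\mathbb{N}_0$, let $c(N)=\operatorname{card}\{k:0\leq k\leq N,\ \phi^k(\omega_0)\in Y\}$ count the visits of the orbit of $\omega_0$ to $Y$ up to time $N$. By the definition of the uniform sequence, $k_0,k_1,\dots,k_n$ are precisely the first $n+1$ visit times, so $c(k_n)=n+1$ for every $n\geq0$. Hence the lemma will follow once I show $\frac{c(N)}{N+1}\to\mu(Y)$ as $N\to\infty$: this gives $\frac{n+1}{k_n+1}=\frac{c(k_n)}{k_n+1}\to\mu(Y)$ (since $k_n\to\infty$), and $\frac{n}{k_n}$ differs from $\frac{n+1}{k_n+1}$ only by factors tending to $1$.

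To obtain $\frac{c(N)}{N+1}\to\mu(Y)$, I first write $\frac{c(N)}{N+1}=\frac{1}{N+1}\sum_{k=0}^{N}\chi_Y(\phi^k(\omega_0))$. Since $\mu(\partial Y)=0$ and $\mu$ is a regular Borel measure (the Radon measure furnished by Riesz representation applied to the strictly ergodic limit functional), I will sandwich $\chi_Y$ between continuous functions. Fixing $\epsilon>0$, inner regularity yields a compact $K\subseteq Y^\circ$ with $\mu(K)>\mu(Y)-\epsilon$, and outer regularity yields an open $U\supseteq\overline{Y}$ with $\mu(U)<\mu(Y)+\epsilon$; here $\mu(Y^\circ)=\mu(\overline{Y})=\mu(Y)$ precisely because $\mu(\partial Y)=0$. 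By Urysohn's lemma on the compact Hausdorff space $X$ there exist $g,h\in C(X)$ with $\chi_K\leq g\leq\chi_{Y^\circ}$ and $\chi_{\overline{Y}}\leq h\leq\chi_U$, so that $g\leq\chi_Y\leq h$ pointwise, $\int_X g\,d\mu>\mu(Y)-\epsilon$, and $\int_X h\,d\mu<\mu(Y)+\epsilon$.

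Applying the strict ergodicity hypothesis at $z=\omega_0$ to $g$ and to $h$ gives
\[
\frac{1}{N+1}\sum_{k=0}^{N}g(\phi^k(\omega_0))\to\int_X g\,d\mu,\qquad
\frac{1}{N+1}\sum_{k=0}^{N}h(\phi^k(\omega_0))\to\int_X h\,d\mu.
\]
Since $g\leq\chi_Y\leq h$, these averages bound $\frac{c(N)}{N+1}$ below and above, so
\[
\mu(Y)-\epsilon<\int_X g\,d\mu\leq\liminf_{N\to\infty}\frac{c(N)}{N+1}\leq\limsup_{N\to\infty}\frac{c(N)}{N+1}\leq\int_X h\,d\mu<\mu(Y)+\epsilon.
\]
As $\epsilon>0$ is arbitrary, $\frac{c(N)}{N+1}\to\mu(Y)$, which completes the argument.

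The main obstacle is the continuous approximation of $\chi_Y$: strict ergodicity only controls averages of \emph{continuous} functions, so the whole argument hinges on the hypothesis $\mu(\partial Y)=0$ (which makes $Y$ a $\mu$-continuity set) together with regularity of $\mu$ to produce the sandwiching pair $g,h$. Everything after that is bookkeeping with the counting function $c(N)$ and elementary limits.
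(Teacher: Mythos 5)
Your proof is correct. The paper gives no proof of this lemma, deferring to \cite{sat}; your argument---reducing $\frac{n}{k_n}\to\mu(Y)$ to the convergence of the orbital averages $\frac{1}{N+1}\sum_{k=0}^{N}\chi_Y(\phi^k(\omega_0))\to\mu(Y)$ and handling the discontinuity of $\chi_Y$ by sandwiching it between continuous functions via $\mu(\partial Y)=0$ and the regularity of $\mu$---is precisely the standard route taken there (it is the content of Lemma 1 of \cite{sat}, which the paper itself invokes in the proof of Theorem \ref{t42}), so your proposal matches the intended argument.
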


\begin{lemma}\label{l42}
If $\{k_j\}_{j=0}^{\infty}$ is a uniform sequence with the apparatus as given above, then for any $\epsilon>0$ there exist open subsets $Y_1$, $Y_2$, and $W$ of $X$ such that

(i) $Y_1\subseteq Y\subseteq Y_2$, $\mu(Y_2\setminus Y_1)<\epsilon$, and $\mu(\partial Y_1)=\mu(\partial Y_2)=0$.

(ii) $\omega_0\in W$ and, for every $\omega\in W$ and all $k\geq0$,
\[
\chi_{Y_1}(\phi^k(\omega))\leq\chi_{Y}(\phi^k(\omega_0)\leq\chi_{Y_2}(\phi^k(\omega)).
\]
\end{lemma}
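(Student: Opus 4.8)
The plan is to realize $Y_1$ and $Y_2$ as superlevel sets of two Urysohn functions tailored to sandwich $Y$ between its interior and closure with small measure gap, and then to promote the pointwise sandwiching at $\omega_0$ to a uniform-in-$k$ statement on a whole neighborhood $W$ of $\omega_0$ by invoking the equicontinuity of $\{\phi^n\}$. I work with the unique uniformity on the compact Hausdorff space $X$ and use that $\mu$, being the Riesz measure attached to the strictly ergodic averaging on $C(X)$, is a regular Borel measure. First I would set up the scaffolding: put $G=\operatorname{int}(Y)$ and $K=\overline{Y}$. Since $K\setminus Y\subseteq\partial Y$ and $Y\setminus G\subseteq\partial Y$, the hypothesis $\mu(\partial Y)=0$ gives $\mu(G)=\mu(Y)=\mu(K)$. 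By inner regularity I choose a compact $C\subseteq G$ with $\mu(G\setminus C)<\epsilon/2$, and by outer regularity an open $O\supseteq K$ with $\mu(O\setminus K)<\epsilon/2$. As $X$ is compact Hausdorff, hence normal, Urysohn's lemma furnishes continuous $g,f:X\to[0,1]$ with $g=1$ on $C$, $g=0$ on $X\setminus G$, and $f=1$ on $K$, $f=0$ on $X\setminus O$.

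Next I would arrange null boundaries. For $s\in(0,1)$ the sets $\{g>s\}$ and $\{f>s\}$ are open, with boundaries contained in $\{g=s\}$ and $\{f=s\}$. Distinct level sets are disjoint and $\mu$ is finite, so only countably many $s$ give $\mu(\{g=s\})>0$ or $\mu(\{f=s\})>0$; I pick $s_1,s_2\in(0,1)$ avoiding all of these and set $Y_1=\{g>s_1\}$, $Y_2=\{f>s_2\}$. These are open with $\mu(\partial Y_1)=\mu(\partial Y_2)=0$, and they satisfy the nesting $C\subseteq Y_1\subseteq G\subseteq Y\subseteq K\subseteq Y_2\subseteq O$. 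Using $\mu(K\setminus Y)=\mu(Y\setminus G)=0$, this yields $\mu(Y\setminus Y_1)\le\mu(G\setminus C)<\epsilon/2$ and $\mu(Y_2\setminus Y)\le\mu(O\setminus K)<\epsilon/2$, hence $\mu(Y_2\setminus Y_1)<\epsilon$, giving (i).

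To establish (ii) I would bring in equicontinuity. By uniform continuity of $g$ and $f$ on $X$, I choose an entourage $V^{\ast}$ so small that $(x,y)\in V^{\ast}$ forces both $|g(x)-g(y)|<s_1$ and $|f(x)-f(y)|<1-s_2$. Recall equicontinuity of $\{\phi^n\}_{n\ge0}$ means that for $V^{\ast}$ there is an open entourage $U$ with $(\phi^k x,\phi^k y)\in V^{\ast}$ for all $k\ge0$ whenever $(x,y)\in U$. I then set $W=\{\omega:(\omega_0,\omega)\in U\}$, an open neighborhood of $\omega_0$. For $\omega\in W$ and any $k$, $(\phi^k\omega_0,\phi^k\omega)\in V^{\ast}$, so: if $\phi^k\omega\in Y_1$ then $g(\phi^k\omega)>s_1$, whence $g(\phi^k\omega_0)>0$ and $\phi^k\omega_0\in\{g>0\}\subseteq G\subseteq Y$; and if $\phi^k\omega_0\in Y\subseteq K$ then $f(\phi^k\omega_0)=1$, whence $f(\phi^k\omega)>s_2$ and $\phi^k\omega\in Y_2$. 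These two implications are precisely the inequalities $\chi_{Y_1}(\phi^k\omega)\le\chi_Y(\phi^k\omega_0)\le\chi_{Y_2}(\phi^k\omega)$.

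The main obstacle is conceptual rather than computational. A plain continuity argument performed separately at each $k$ would only produce a neighborhood $W_k$ depending on $k$, whereas the lemma demands a single $W$ valid for all $k$ at once; it is exactly equicontinuity of $\{\phi^n\}$ that supplies this uniformity, which is why that hypothesis is placed on $(X,\phi)$. The remaining delicacy is achieving the small measure gap and the two null boundaries simultaneously, and this is handled cleanly by the Urysohn superlevel-set construction together with the countability of level sets of positive measure; no metrizability of $X$ is needed, since everything can be phrased through its canonical uniformity.
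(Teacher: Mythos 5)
The paper does not prove this lemma at all: it is quoted as background, with the reader referred to \cite{sat} (and ultimately to Brunel--Keane \cite{bk}) for the details. Your argument is correct and is essentially the standard one from those sources --- approximate $Y$ from inside and outside by superlevel sets of Urysohn functions, discard the countably many levels of positive measure to get null boundaries, and use equicontinuity of $\{\phi^n\}$ to convert the two continuity implications into a single neighborhood $W$ valid for all $k$ simultaneously. The only difference worth noting is that the classical treatments work on a compact metric space and phrase equicontinuity via the metric, whereas you run the same argument through the unique uniformity of the compact Hausdorff space; this is a mild gain in generality and changes nothing essential.
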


The following is the main result of \cite{lm}.

\begin{theorem}\label{t41}\cite{lm} Let a semifinite von Neumann algebra $(\mathcal{M},\tau)$ have a separable predual, and let $T\in DS^+(\mathcal{M},\tau)$. If $\textbf{k}$ is a uniform sequence and $x\in L_1(\mathcal{M},\tau)$, then the subsequential averages $M_n^{\textbf{k}}(T)(x)$ converge b.a.u. to some $\widehat{x}\in L_1(\mathcal{M},\tau)$.
\end{theorem}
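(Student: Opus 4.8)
The plan is to realize the subsequential averages along $\textbf{k}$ as ordinary ergodic averages of a Dunford--Schwartz operator on a product algebra, and then to descend to the fibre over the apparatus point $\omega_0$ by means of the sandwich in Lemma \ref{l42} and positivity of $T$. I first reduce the statement. Put $c_j=\chi_{\textbf{k}}(j)=\chi_Y(\phi^j(\omega_0))$. By Lemma \ref{l41} and Remark \ref{r31} the sequence $\textbf{k}$ has density $d=\mu(Y)>0$, so Proposition \ref{p32} (with $\beta=\textbf{1}$) reduces the b.a.u. convergence of $\{M_n^{\textbf{k}}(T)(x)\}$ to that of $M_n^{c}(T)(x)=\frac1n\sum_{k=0}^{n-1}c_k T^k(x)$, the two limits differing only by the factor $d$. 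Decomposing $x$ into finitely many positive pieces and using linearity of b.a.u. convergence, it suffices to treat $x\in L_1^+$.

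Next I pass to the product algebra $\mathcal{N}=\mathcal{M}\,\bar\otimes\,L_\infty(X,\mu)$ with trace $\tilde\tau=\tau\otimes\mu$; since $\mu(X)=1$ this is again semifinite. Let $S=T\otimes T_\phi\in DS^+(\mathcal{N},\tilde\tau)$, where $T_\phi g=g\circ\phi$ is the Koopman operator of the measure-preserving map $\phi$. Identifying $\mathcal{N}$ with the field algebra $L_\infty(X;\mathcal{M})$, one has $\|a\|_\infty=\operatorname{ess\,sup}_\omega\|a(\omega)\|_\infty$ and $\tilde\tau(a)=\int_X\tau(a(\omega))\,d\mu(\omega)$; the separable-predual hypothesis is what guarantees a well-behaved disintegration into such fibres. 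For $x\in L_1^+$ and $g\in L_\infty(X)^+$ we have $x\otimes g\in L_1(\mathcal{N})^+$ and
$$M_n(S)(x\otimes g)=\Big[\,\omega\mapsto \tfrac1n\sum_{k=0}^{n-1}g(\phi^k\omega)\,T^k(x)\,\Big],$$
which by Theorem \ref{t21} converges b.a.u. in $\mathcal{N}$.

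Now fix $\epsilon>0$ and take $Y_1\subseteq Y\subseteq Y_2$ and the neighbourhood $W\ni\omega_0$ of Lemma \ref{l42}, so $\mu(Y_2\setminus Y_1)<\epsilon$ and $\mu(W)>0$ (as $\operatorname{supp}\mu=X$). Applying the previous display to $g=\chi_{Y_1},\chi_{Y_2}$ and to $g=\chi_{Y_2\setminus Y_1}$ produces b.a.u. limits $\widehat X_1,\widehat X_2$ and a positive limit $\widehat R=\widehat X_2-\widehat X_1$ with $\|\widehat R\|_1\le\|x\|_1\,\mu(Y_2\setminus Y_1)<\epsilon\|x\|_1$. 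Because $T^k(x)\ge0$, Lemma \ref{l42}(ii) gives, for every $\omega\in W$ and all $n$, the operator sandwich
$$M_n(S)(x\otimes\chi_{Y_1})(\omega)\ \le\ M_n^c(T)(x)\ \le\ M_n(S)(x\otimes\chi_{Y_2})(\omega).$$
Multiplying by the central projection $\mathbf 1_W=1\otimes\chi_W$ rewrites this as a sandwich of the constant field $M_n^c(T)(x)\otimes\chi_W$ between $\mathbf 1_W M_n(S)(x\otimes\chi_{Y_i})$ inside $\mathcal{N}\mathbf 1_W$.

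The technical heart---and what I expect to be the main obstacle---is turning this sandwich into honest b.a.u. convergence of $M_n^c(T)(x)$ in $\mathcal{M}$. I would choose a single projection $\tilde e\le\mathbf 1_W$ in $\mathcal{N}$ that simultaneously witnesses the b.a.u. convergence of both $M_n(S)(x\otimes\chi_{Y_i})$ and lies below the spectral projection $\chi_{[0,\eta]}(\widehat R)$; the bound $\|\widehat R\|_1<\epsilon\|x\|_1$ and Chebyshev keep $\tilde\tau(\tilde e^\perp)$ small. Compressing the sandwich by $\tilde e$ and using $-\tilde e B\tilde e\le\tilde e A\tilde e\le\tilde e B\tilde e$ whenever $-B\le A\le B$, the two b.a.u.-convergent ends contribute terms vanishing as $n,m\to\infty$ while the gap contributes at most $\|\tilde e\widehat R\tilde e\|_\infty\le\eta$, giving $\limsup_{n,m}\|\tilde e\,(M_n^c(T)(x)-M_m^c(T)(x))\otimes\chi_W\,\tilde e\|_\infty\le\eta$. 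Since the field $M_n^c(T)(x)\otimes\chi_W$ is constant in $\omega$, a Fubini argument with the two formulas for $\tilde\tau$ and $\|\cdot\|_\infty$ extracts a fibre $\omega_*\in W$ for which $\tau(\tilde e(\omega_*)^\perp)$ is small and $\limsup_{n,m}\|\tilde e(\omega_*)(M_n^c(T)(x)-M_m^c(T)(x))\tilde e(\omega_*)\|_\infty\le\eta$. Letting $\epsilon,\eta\to0$ shows $\{M_n^c(T)(x)\}$ is b.a.u.-Cauchy in $\mathcal{M}$, hence b.a.u. convergent to some $\widehat y\in L_0$ by completeness of $L_0$ under b.a.u. convergence \cite{cls}; the uniform bound $\|M_n^c(T)(x)\|_1\le\|x\|_1$ with closedness of the unit ball in measure places $\widehat y\in L_1$. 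Proposition \ref{p32} then returns the b.a.u. convergence of $M_n^{\textbf{k}}(T)(x)$ to $\widehat y/d\in L_1$. The two places demanding care are precisely this fibre-extraction (where separability is used) and the fact that only positivity of $T$ makes the order sandwich---and therefore the whole scheme---available.
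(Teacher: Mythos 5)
Your overall architecture---realizing the subsequential averages through the skew product operator on $L_\infty(X,\mu)\,\bar\otimes\,\mathcal{M}$, invoking Theorem \ref{t21} there, and descending to the fibre over $\omega_0$ by means of Lemma \ref{l42}---is the same as in the paper, which obtains this statement as the case $\beta=\textbf{1}$, $p=1$ of Theorem \ref{t42}. But your endgame is genuinely different, and that is where the argument does not close. The paper never needs the operator sandwich or a fibre extraction of projections: it restricts to $x\in L_1\cap\mathcal{M}$, uses Lemma 4.1 of \cite{cls} to pick a single $\omega\in W$ at which $\frac1n\sum_j\chi_{Y_1}(\phi^j(\omega))T^j(x)$ converges b.a.u., and then compares this with $\frac1n\sum_j\chi_Y(\phi^j(\omega_0))T^j(x)$ in the \emph{uniform norm}: since $0\le\frac1n\sum_j\bigl[\chi_Y(\phi^j(\omega_0))-\chi_{Y_1}(\phi^j(\omega))\bigr]<\epsilon$ for all large $n$ (Lemma \ref{l42}(ii) plus Lemma 1 of \cite{sat}), the difference of the two operator averages is at most $\epsilon\|x\|_\infty$ eventually, and Lemma \ref{l31} concludes; the extension from $L_1\cap\mathcal{M}$ to $L_1$ is then done by Propositions \ref{p31} and \ref{p22}. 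The operative hypothesis in the dense-subspace step is thus boundedness of $x$, not positivity, and no projections enter the comparison at all.

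The concrete gap in your version is a circularity in the fibre-extraction step. To prove that $\{M_n^{c}(T)(x)\}$ is b.a.u.\ Cauchy you must produce, for \emph{every} pair $(\alpha,\eta)$, a projection $e\in\mathcal{P}(\mathcal{M})$ with $\tau(e^\perp)\le\alpha$ and $\limsup_{n,m}\|e(M_n^{c}(T)(x)-M_m^{c}(T)(x))e\|_\infty\le\eta$, and then intersect countably many such $e$. Your extraction obtains $\tau(\tilde e(\omega_*)^\perp)$ small from $\widetilde\tau(\mathbf 1_W-\tilde e)$ small by a Chebyshev estimate over $W$, which necessarily divides by $\mu(W)$. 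The contribution of the spectral projection of $\widehat R$ to $\widetilde\tau(\mathbf 1_W-\tilde e)$ is of order $\epsilon\|x\|_1/\eta$, so you need $\epsilon\le\alpha\eta\,\mu(W)/\|x\|_1$ up to constants---but $W=W(\epsilon)$ is handed to you by Lemma \ref{l42} only \emph{after} $\epsilon$ is chosen, and nothing prevents $\mu(W(\epsilon))$ from shrinking as fast as $\epsilon$ does. As written, you therefore cannot make $\tau(e^\perp)$ arbitrarily small while retaining the norm bound. The argument can be repaired: Lemma 4.1 of \cite{cls} gives b.a.u.\ convergence of all three fields $M_n(S)(x\otimes\chi_{Y_1})(\omega)$, $M_n(S)(x\otimes\chi_{Y_2})(\omega)$, $M_n(S)(x\otimes\chi_{Y_2\setminus Y_1})(\omega)$ in $\mathcal{M}$ for $\mu$-a.e.\ $\omega$, and Lemma 1 of \cite{sat} yields the \emph{fibrewise} bound $\|\widehat R(\omega)\|_1\le\|x\|_1\,\mu(Y_2\setminus Y_1)<\epsilon\|x\|_1$; running the entire sandwich argument in $\mathcal{M}$ at one good $\omega\in W$, rather than in $\mathcal{N}$ followed by disintegration of projections, removes $\mu(W)$ from every estimate. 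Even with this repair, the paper's route through the uniform-norm comparison on $L_1\cap\mathcal{M}$ and Lemma \ref{l31} is markedly shorter, while your scheme has the modest advantage of treating all of $L_1^+$ at once and thereby bypassing the density argument of Propositions \ref{p31} and \ref{p22}.
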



As they will be necessary for the statement and proof of the next theorem, we will let $\widetilde{L}_p$ and $\widetilde{\mathcal{M}}$ denote the spaces of Bochner $p$-integrable and bounded weakly measurable functions respectively (where $1\leq p<\infty$) on the measure space $(X,\mathcal{B}_X,\mu)$. If $\mathcal{M}$ has n.s.f. trace $\tau$, then $\widetilde{\mathcal{M}}$ is a semifinite von Neumann algebra with n.s.f. trace $\widetilde{\tau}$ given by
$$
\widetilde{\tau}(f)=\int_{X}\tau(f(z))d\mu(z)
$$
whenever $f\in\widetilde{\mathcal{M}}^+$. If $\mathcal{M}$ has a separable predual then, in view of Proposition 1.22.12 and Theorem 1.22.13 of \cite{sak}, we see that predual $L_1(\widetilde{\mathcal{M}},\widetilde{\tau})$ of $\widetilde{M}$ is isomorphic to $\widetilde{L_1}$, and $L_p(\widetilde{M},\widetilde{\tau})$ is isomorphic to $\widetilde{L}_p$ for $1< p<\infty$.

\medskip

\begin{theorem}\label{t42} Assume $\mathcal{M}$ has a separable predual. Let $\textbf{k}=\{k_j\}_{j=0}^{\infty}$ be a uniform sequence and $\beta\in\ell_\infty(\mathbb C)$. If $\beta$ is a.u.- (b.a.u.-) good in $L_p(\widetilde{\mathcal{M}},\widetilde{\tau})$ for $2\leq p<\infty$ (respectively, $1\leq p<\infty$), then $\textbf{k}$ is a.u.- (respectively, b.a.u.-) good for $\beta$ in $L_p(\mathcal{M},\tau)$, with the limit $\widehat{x}\in L_p(\mathcal{M},\tau)$.
\end{theorem}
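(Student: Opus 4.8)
The plan is to reduce convergence of the subsequential weighted averages on $L_p(\mathcal{M},\tau)$ to convergence of full weighted averages on the larger algebra $L_p(\widetilde{\mathcal{M}},\widetilde{\tau})$, exploiting the structure of the uniform sequence $\textbf{k}$. The key object is the operator on $\widetilde{\mathcal{M}}$ that combines the dynamics of $T$ on $\mathcal{M}$ with the strictly ergodic base transformation $\phi$. Specifically, I would define $\widetilde{T}:\widetilde{L}_1+\widetilde{\mathcal{M}}\to\widetilde{L}_1+\widetilde{\mathcal{M}}$ by $(\widetilde{T}f)(z)=T(f(\phi(z)))$ (or a closely related skew-product form), and verify that $\widetilde{T}\in DS^+(\widetilde{\mathcal{M}},\widetilde{\tau})$: positivity is clear since $T$ is positive and composition with $\phi$ preserves positivity, the $\widetilde{L}_1$-contraction uses $\mu$-invariance of $\phi$ together with the $L_1$-contractivity of $T$, and the $\widetilde{\mathcal{M}}$-contraction is immediate.

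Next I would transfer the problem. Fix $x\in L_1(\mathcal{M},\tau)\cap\mathcal{M}$ (a dense subspace), and build the constant-in-$z$ function, or rather a function on $X$ incorporating $\chi_Y$, so that evaluating the full weighted average $M_n^\beta(\widetilde{T})$ of this function at the distinguished point $\omega_0$ reproduces a weighted sum running over exactly those indices $k$ with $\phi^k(\omega_0)\in Y$ — that is, over the uniform sequence $\textbf{k}$. By Lemma~\ref{l41} we have $n/k_n\to\mu(Y)$, so after rescaling by $k_n/n$ (handled by Lemma~\ref{l32}), the subsequential weighted average $M_n^{\beta,\textbf{k}}(T)(x)$ is asymptotically the evaluation at $\omega_0$ of $\mu(Y)^{-1}$ times a full weighted average of $\widetilde{T}$. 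The hypothesis that $\beta$ is a.u.- (b.a.u.-) good in $L_p(\widetilde{\mathcal{M}},\widetilde{\tau})$ gives a.u. (b.a.u.) convergence of $\{M_n^\beta(\widetilde{T})(\widetilde{x})\}$ on the big algebra; the remaining task is to descend from convergence of the $\widetilde{\mathcal{M}}$-valued function to convergence of its value at $\omega_0$ inside $\mathcal{M}$.

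The subtlety in that descent, and the main obstacle, is that $\chi_Y$ is not continuous, so evaluation at $\omega_0$ of the weighted average computed with $\chi_Y$ need not coincide cleanly with the honest subsequential sum, and pointwise evaluation is not continuous for weakly measurable functions in general. This is precisely where Lemma~\ref{l42} enters: for each $\epsilon>0$ choose open sets $Y_1\subseteq Y\subseteq Y_2$ with $\mu(Y_2\setminus Y_1)<\epsilon$ and $\mu(\partial Y_i)=0$, and a neighborhood $W\ni\omega_0$ on which $\chi_{Y_1}(\phi^k(\omega))\le\chi_Y(\phi^k(\omega_0))\le\chi_{Y_2}(\phi^k(\omega))$ for all $k$. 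The functions built from $Y_1,Y_2$ give boundary-null sets, so the corresponding sequences are genuinely uniform (or have densities governed by $\mu(Y_i)$), and one sandwiches the target average between averages whose convergence is already known; letting $\epsilon\to0$ and using $\mu(Y_2\setminus Y_1)\to0$ squeezes out the convergence of $M_n^{\beta,\textbf{k}}(T)(x)$ at the level of $\mathcal{M}$.

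Finally I would upgrade from the dense class $L_1\cap\mathcal{M}$ to all of $L_p(\mathcal{M},\tau)$. Since $\beta$ is bounded and $\textbf{k}$ has positive lower density (a uniform sequence has density $\mu(Y)>0$), Proposition~\ref{p31} gives that $\{M_n^{\beta,\textbf{k}}(T)\}$ is u.e.m. (b.u.e.m.) at zero on $(L_p,\|\cdot\|_p)$ for $2\le p<\infty$ (resp.\ $1\le p<\infty$); Proposition~\ref{p22} then shows the convergence set is closed, and density of $L_1\cap\mathcal{M}$ in $L_p$ yields convergence everywhere, exactly as at the end of the proof of Theorem~\ref{t31}. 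The limit $\widehat{x}$ lies in $L_p$ by the same closed-unit-ball-in-measure argument used there, since $\|M_n^{\beta,\textbf{k}}(T)(x)\|_p\le\|\beta\|_{\ell_\infty}\|x\|_p$ for all $n$.
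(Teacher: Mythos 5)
Your proposal follows essentially the same route as the paper's proof: the skew-product operator $\widetilde{T}f = T\circ f\circ\phi$ on $\widetilde{\mathcal{M}}$, transfer of the a.u.\ (b.a.u.) convergence of $M_n^\beta(\widetilde{T})$ applied to $\widetilde{x}(z)=\chi_{Y_1}(z)x$ down to almost every $z$, the Lemma~\ref{l42} sandwich together with Sato's density lemma to pass from a generic $\omega\in W$ with $\chi_{Y_1}$ to $\omega_0$ with $\chi_Y$, rescaling by $k_{n-1}/n\to 1/\mu(Y)$ via Lemmas~\ref{l41} and~\ref{l32}, and the u.e.m./b.u.e.m.\ closure argument (Propositions~\ref{p31} and~\ref{p22}) to extend from $L_1\cap\mathcal{M}$ to $L_p$. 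The plan is correct and matches the paper's argument in all essential steps.
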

\begin{proof}
Our proof follows closely the proof of Theorem 4 of \cite{lm}. We will only prove the a.u. case. The b.a.u. case will follow similarly. Accordingly, assume that $2\leq p<\infty$ and $T\in DS^+(\mathcal{M},\tau)$.

Define $\widetilde{T}:\widetilde{L}_1+\widetilde{\mathcal{M}}\to\widetilde{L}_1+\widetilde{\mathcal{M}}$ by, for $z\in X$,
$$(\widetilde{T}f)(z)=T(f(\phi(z))).$$
Then $\widetilde{T}\in DS^+(\widetilde{M},\widetilde{\tau})$, and so we may consider the corresponding weighted averages
$$
\frac{1}{n}\sum_{j=0}^{n-1}\beta_j\widetilde{T}^j(f)= \frac{1}{n}\sum_{j=0}^{n-1}\beta_jT^j(f\circ\phi^j)
$$
for $f\in\widetilde{L}_1+\widetilde{\mathcal{M}}$. Since $\beta$ is a.u.-good in $L_p(\widetilde{M},\widetilde{\tau})$, it follows that the averages $
\{\frac{1}{n}\sum_{j=0}^{n-1}\beta_j\widetilde{T}^j(f)\}_{n=1}^{\infty}$ converge a.u. for every $f\in\widetilde{L}_1\cap\widetilde{M}$. By Lemma 4.1 in \cite{cls}, it follows that
$$ \frac{1}{n}\sum_{j=0}^{n-1}\beta_jT^j(f(\phi^j(z)))
$$
converge a.u. for $\mu$-a.e. $z\in X$.

Assume $x\in L_1\cap\mathcal{M}$ and fix $\epsilon>0$. Let $W,Y_1,Y_2\subset X$ satisfy the conclusion of Lemma \ref{l42} with this $\epsilon$. Define $\widetilde{x}:X\to\widetilde{L}_1$ by 
\[
\widetilde{x}(z)=\chi_{Y_1}(z)x
\]
for $z\in X$. Then $\widetilde{x}\in\widetilde{L}_1\cap\widetilde{\mathcal{M}}$, so that the averages
$$\frac{1}{n}\sum_{j=0}^{n-1}\beta_jT^j(x)\chi_{Y_1}(\phi^j(z))
$$
converge a.u. for $\mu$-a.e. $z\in X$. Since $\mu(W)>0$, we can find $\omega\in W$ such that
\[
\frac{1}{n}\sum_{j=0}^{n-1}\beta_jT^j(x)\chi_{Y_1}(\phi^j(\omega))
\]
converges a.u.

Lemma 1 of \cite{sat} tells us that, since $\mu(\partial Y_j)=0$, we have
\[
\lim_{n\to\infty}\frac{1}{n}\sum_{j=0}^{n-1}\chi_{Y_k}(\phi^j(\omega))=\mu(Y_k)\ \text{ for }k\in\{1,2\}.
\]
The properties of $Y_1$, $Y_2$, and $W$ from Lemma \ref{l42} then entail that there exists $N$ such that $n\geq N$ implies
$$0\leq\frac{1}{n}\sum_{j=0}^{n-1}[\chi_Y(\phi^j(\omega_0))-\chi_{Y_1}(\phi^j(\omega))]<\epsilon.$$
With this, letting $\|\beta\|_{\ell_\infty}=\sup_{j\geq0}|\beta_j|$, we find that
\begin{align*}
\left\|\frac{1}{n}\sum_{j=0}^{n-1}\beta_jT^j(x)\chi_Y(\phi^j(\omega_0))\right.&\left.-\frac{1}{n}\sum_{j=0}^{n-1}\beta_jT^j(x)\chi_{Y_1}(\phi^j(\omega))\right\|_\infty \\
&\leq\frac{1}{n}\sum_{j=0}^{n-1}\|\beta_jT^j(x)(\chi_Y(\phi^j(\omega_0))-\chi_{Y_1}(\phi^j(\omega))\|_\infty \\
&\leq \|\beta\|_{\ell_\infty}\|x\|_\infty\frac{1}{n}\sum_{j=0}^{n-1}(\chi_{Y}(\phi^j(\omega_0))-\chi_{Y_1}(\phi^j(\omega))) \\
&<\|\beta\|_{\ell_\infty}\|x\|_\infty\epsilon,
\end{align*}
which, by Lemma \ref{l31}, implies that the averages
\[
\frac{1}{n}\sum_{j=0}^{n-1}\beta_jT^j(x)\chi_Y(\phi^j(\omega_0))\]
converge a.u. as $n\to\infty$.

Next, we have
\[
M_n^{\beta,\textbf{k}}(T)(x)=\frac{1}{n}\sum_{j=0}^{n-1}\beta_{k_j}T^{k_j}(x)=\frac{k_{n-1}+1}{n}\frac{1}{k_{n-1}+1}\sum_{j=0}^{k_{n-1}}\beta_jT^j(x)\chi_{Y}(\phi^j(\omega_0)).
\]
Since $\frac{k_{n-1}+1}{n}\to\frac{1}{\mu(Y)}$ as $n\to\infty$ by Lemma \ref{l41}, Lemma \ref{l32} implies that the averages $\{M_n^{\beta,\textbf{k}}(T)(x)\}_{n=1}^{\infty}$ converge a.u. for every $x\in L_1\cap\mathcal{M}$.

As the sequences $\{\frac{k_n}{n}\}_{n=1}^{\infty}$ and $\beta$ are bounded, Proposition \ref{p31} implies that $\{M_n^{\beta,\textbf{k}}(T)\}_{n=1}^{\infty}$ is u.e.m. at zero on $(L_p,\|\cdot\|_p)$. By Proposition \ref{p22}, this fact implies that the set
$$C=\{x\in L_p:\{M_n^{\beta,\textbf{k}}(T)(x)\}_{n=1}^{\infty}\text{ converges a.u.}\}$$ is closed in $L_p$. Since $L_1\cap\mathcal{M}$ is a subset of $C$ which is dense in $L_p$, it follows that $L_p=C$. Therefore, since $L_0$ is complete with respect to a.u. convergence, given $x\in L_p$, there is $\widehat{x}\in L_0$ such that $M_n^{\beta,\textbf{k}}(T)(x)\to\widehat{x}$ a.u. Finally, since $\|M_n^{\beta,\textbf{k}}(T)(x)\|_p\leq\|\beta\|_{\ell_\infty}\|x\|_\infty$, we conclude as in Theorem \ref{t31} that $\widehat{x}\in L_p$.
\end{proof}

Similar to what we did in the density $1$ case, by setting $\beta=\textbf{1}$, this yields a  generalization of Theorem \ref{t41}.

\begin{corollary}\label{c41} Assume $\mathcal{M}$ has a separable predual and $T\in DS^+$. If $\textbf{k}$ is a uniform sequence, then, given $1\leq p<\infty$, the subsequential averages $M_n^{\textbf{k}}(T)(x)$ converge b.a.u. to some $\widehat{x}\in L_p(\mathcal{M},\tau)$ for every $x\in L_p(\mathcal{M},\tau)$. Furthermore, this convergence occurs a.u. when $2\leq p<\infty$.
\end{corollary}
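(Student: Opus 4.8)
The plan is to obtain this as the special case $\beta=\textbf{1}$ of Theorem \ref{t42}. Recall that $\textbf{1}$ denotes the constant sequence $\{1\}_{j=0}^{\infty}$, for which $\beta_{k_j}=1$ and hence $M_n^{\textbf{1},\textbf{k}}(T)(x)=\frac{1}{n}\sum_{j=0}^{n-1}T^{k_j}(x)=M_n^{\textbf{k}}(T)(x)$, so the subsequential weighted averages reduce exactly to the unweighted subsequential averages appearing in the statement. Thus it suffices to check that $\beta=\textbf{1}$ satisfies the hypothesis of Theorem \ref{t42}, namely that it is a.u.-good (respectively, b.a.u.-good) in $L_p(\widetilde{\mathcal{M}},\widetilde{\tau})$ for $2\leq p<\infty$ (respectively, $1\leq p<\infty$).

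To verify this I would invoke the fact, recorded in the discussion preceding Theorem \ref{t42}, that $\widetilde{\mathcal{M}}$ is itself a semifinite von Neumann algebra equipped with the n.s.f. trace $\widetilde{\tau}$. Consequently Theorem \ref{t21}---equivalently, Example \ref{e32}---applies verbatim with the pair $(\widetilde{\mathcal{M}},\widetilde{\tau})$ in place of $(\mathcal{M},\tau)$. This yields that the ordinary ergodic averages $M_n(S)(f)$ converge a.u.\ in $L_p(\widetilde{\mathcal{M}},\widetilde{\tau})$ for every $S\in DS^+(\widetilde{\mathcal{M}},\widetilde{\tau})$ and every $f$ when $2\leq p<\infty$, and b.a.u.\ when $1\leq p<\infty$; that is, $\textbf{1}$ is a.u.-good (respectively, b.a.u.-good) in $L_p(\widetilde{\mathcal{M}},\widetilde{\tau})$ in exactly the required range.

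With this hypothesis in hand, Theorem \ref{t42} applied to $\beta=\textbf{1}$ immediately gives that $\textbf{k}$ is a.u.-good (respectively, b.a.u.-good) for $\textbf{1}$ in $L_p(\mathcal{M},\tau)$, with limit $\widehat{x}\in L_p(\mathcal{M},\tau)$; rewriting $M_n^{\textbf{1},\textbf{k}}(T)(x)$ as $M_n^{\textbf{k}}(T)(x)$ delivers the corollary. There is no genuine obstacle here: the substantive work was carried out in the proof of Theorem \ref{t42}, and the only thing to notice is that the hypothesis of that theorem for the weight $\textbf{1}$ is nothing more than the classical (unweighted) noncommutative individual ergodic theorem applied to the auxiliary algebra $\widetilde{\mathcal{M}}$.
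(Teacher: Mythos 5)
Your proposal is correct and matches the paper's own derivation: the paper obtains Corollary \ref{c41} precisely by setting $\beta=\textbf{1}$ in Theorem \ref{t42}. Your additional observation that the hypothesis of Theorem \ref{t42} for $\beta=\textbf{1}$ is supplied by Theorem \ref{t21} applied to the semifinite pair $(\widetilde{\mathcal{M}},\widetilde{\tau})$ is exactly the (implicit) justification the paper relies on.
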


\begin{remark}\label{r41} \textbf{1.}  It is known that $\mathcal{M}$ having a separable predual is equivalent to being able to assume that the Hilbert space $\mathcal{H}$ it acts on which is separable (see \cite{su}). In this case it is also known that $\widetilde{M}$ is $*$-isomorphic to $L_\infty(X,\mathcal{B}_X,\mu)\overline{\otimes}\mathcal{M}$, and that this acts on $L_2(X,\mathcal{B}_X,\mu)\otimes\mathcal{H}$. If $L_1(X,\mathcal{B}_{X},\mu)$ is separable, then $L_2(X,\mathcal{B}_{X},\mu)$ is too, so that the tensor product Hilbert space $L_2(X,\mathcal{B}_{X},\mu)\otimes\mathcal{H}$ is separable. Therefore $\widetilde{M}$ acts on a separable Hilbert space, implying it has a separable predual. So if $(X,\mathcal{B}_X,\mu)$ is such that $L_1(X,\mathcal{B}_X,\mu)$ is separable, then Theorem 4.2 will hold for every bounded Besicovich sequence.

\textbf{2.} Lemma 4.1 can be shown to be equivalent to saying that a uniform sequence $\textbf{k}$ with apparatus $(X,\mathcal{B}_X,\mu)$, $Y$, and $\omega_0$ has  density $\mu(Y)$.
\end{remark}

\section{Convergence along Block Sequences of Positive Lower Density}

In this section, we will study the a.u. and b.a.u. convergence of the subsequential averages along certain types of block sequences with positive lower density. After this, we show that this can be extended by allowing bounded Besicovich sequences as weights when $\mathcal{M}$ has a separable predual. For more information about block sequences, see \cite{bl} or \cite{ps}. The norm convergence of the subsequential ergodic averages of any $T\in DS^+$ along block sequences was considered in \cite{comli}.

Let $\mathcal{I}=\{I_n=[a_n,b_n]\}_{n=0}^{\infty}$ be a sequence of intervals in $[0,\infty)$ whose endpoints are in $\mathbb{N}_0$ such that $b_n<a_{n+1}$ for every $n\geq0$.

If $\textbf{k}=\{k_n\}_{n=0}^{\infty}$ is the sequence determined by the strictly increasing enumeration of $\bigcup_{n}I_n\cap\mathbb{N}_0$, then $\textbf{k}$ is called a \textit{block sequence}. Define a function $N_{\mathcal{I}}:\mathbb{N}_0\to\mathbb{N}_0$ to be such that $$k_n\in[a_{N_{\mathcal{I}}(n)},b_{N_{\mathcal{I}}(n)}]$$ (which exists and is well-defined by the restrictions on the intervals). Observe that $N_{\mathcal{I}}(n)$ counts the number of intervals of $\mathcal{I}$ one has to skip before finding the interval that $k_n$ lives in.

\begin{example}\label{e51}
For each $n\in\mathbb{N}_0$, let $I_n=[n^2,n^2+n]$. Let $\mathcal{I}=\{I_n\}_{n=0}^{\infty}$, and let $\textbf{k}$ be the strictly increasing enumeration of $\bigcup_{n=0}^{\infty}I_n\cap\mathbb{N}_0$, so that $\textbf{k}$ is a block sequence. One can show that $\textbf{k}$ has lower density $\frac{1}{2}$ and that $N_{\mathcal{I}}(n)=[\sqrt{n}]$, where $[x]$ denotes the greatest integer less than or equal to $x$ for each $x\in\mathbb{R}$, which implies that $0\leq\frac{N_{\mathcal{I}}(n)}{n}\leq\frac{1}{\sqrt{n}}\to0$ as $n\to\infty$.
\end{example}

\begin{theorem}\label{t51}
Let $\textbf{k}=\{k_j\}_{j=0}^{\infty}$ be a block sequence with lower density $d>0$ such that $\frac{N_{\mathcal{I}}(n)}{n}\to0$ as $n\to\infty$ and $T\in DS^+$. Then, given $x\in L_p(\mathcal{M},\tau)$, the subsequential averages
$M_n^{\textbf{k}}(T)(x)$ converge a.u. (b.a.u.) to some $\widehat{x}\in L_p(\mathcal{M},\tau)$ if $2\leq p<\infty$ (respectively, $1\leq p<\infty$).
\end{theorem}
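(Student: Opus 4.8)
The plan is to follow the two-step scheme used throughout the paper: first invoke the maximal inequality to reduce to a dense subspace, then verify convergence on a well-chosen dense set. Since $\textbf{k}$ has positive lower density $d>0$, Proposition \ref{p31} (with $\beta=\textbf{1}$) shows that $\{M_n^{\textbf{k}}(T)\}_{n=1}^{\infty}$ is u.e.m. at zero on $(L_p,\|\cdot\|_p)$ for $2\le p<\infty$ and b.u.e.m. at zero for $1\le p<\infty$. By Proposition \ref{p22}, the set $C$ of those $x$ for which $\{M_n^{\textbf{k}}(T)(x)\}$ converges a.u. (resp.\ b.a.u.) is then closed in $L_p$, so it suffices to produce a dense subset of $L_p$ lying in $C$.

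The essential point is the \emph{choice} of this dense set. Working first in $L_2$, I would use the mean ergodic decomposition $L_2=\operatorname{Fix}(T)\oplus\overline{\operatorname{Ran}(I-T)}$ coming from von Neumann's theorem for the Hilbert-space contraction $T$ (valid since $\operatorname{Fix}(T)=\operatorname{Fix}(T^*)$ for a contraction), and take $\mathcal{D}=\operatorname{Fix}(T)+(I-T)(L_2\cap\mathcal{M})$. On $\operatorname{Fix}(T)$ convergence is trivial, since $T^{k_j}u=u$ gives $M_n^{\textbf{k}}(T)(u)=u$ for every $n$. The crux is the coboundary part: for $y\in L_2\cap\mathcal{M}$ and $x=(I-T)y$, the summands $T^{k_j}x=T^{k_j}y-T^{k_j+1}y$ telescope \emph{within each block} of $\textbf{k}$, because the $k_j$ lying in one block are consecutive integers. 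Writing $r=N_{\mathcal{I}}(n-1)$ for the index of the block containing $k_{n-1}$, this collapses $nM_n^{\textbf{k}}(T)(x)$ to the boundary terms
\[
\sum_{m=0}^{r-1}\bigl(T^{a_m}y-T^{b_m+1}y\bigr)+\bigl(T^{a_r}y-T^{k_{n-1}+1}y\bigr),
\]
each of operator norm at most $\|y\|_\infty$ since $T$ contracts $\mathcal{M}$. Hence $\|M_n^{\textbf{k}}(T)(x)\|_\infty\le \tfrac{2(N_{\mathcal{I}}(n-1)+1)}{n}\|y\|_\infty\to0$ by the hypothesis $N_{\mathcal{I}}(n)/n\to0$, so $M_n^{\textbf{k}}(T)(x)\to0$ in operator norm and \emph{a fortiori} a.u. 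As $(I-T)(L_2\cap\mathcal{M})$ is dense in $\overline{\operatorname{Ran}(I-T)}$ while elements of $\operatorname{Fix}(T)$ are preserved exactly, $\mathcal{D}$ is dense in $L_2$; being contained in the closed set $C$, it forces $C=L_2$. This settles the a.u.\ convergence for $p=2$, with limit the mean ergodic projection $\widehat{x}=Px$.

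To reach every $1\le p<\infty$, I would bootstrap through $L_1\cap\mathcal{M}\subseteq L_2$: any $x\in L_1\cap\mathcal{M}$ satisfies $\tau(|x|^2)\le\|x\|_\infty\tau(|x|)<\infty$, so by the $p=2$ case $M_n^{\textbf{k}}(T)(x)$ already converges a.u.\ (hence b.a.u.). Since $L_1\cap\mathcal{M}$ is dense in $L_p$ for every $1\le p<\infty$ and lies in the closed set $C$, we conclude $C=L_p$: a.u.\ convergence when $2\le p<\infty$ via the u.e.m.\ form of Proposition \ref{p31}, and b.a.u.\ convergence when $1\le p<\infty$ via the b.u.e.m.\ form. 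Finally, $\|M_n^{\textbf{k}}(T)(x)\|_p\le\|x\|_p$ together with the closedness of the unit ball of $L_p$ in the measure topology places the a.u.\ (b.a.u.) limit $\widehat{x}$ in $L_p$, exactly as at the end of the proof of Theorem \ref{t31}.

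The main obstacle is conceptual rather than computational: one must resist proving convergence directly on the natural dense set $L_1\cap\mathcal{M}$. There the block average is a convex combination, with weights proportional to the block lengths, of window averages $\frac{1}{b_m-a_m+1}\sum_{i=a_m}^{b_m}T^i(x)$ over blocks that are far out yet thin, and these need not individually approach $\widehat{x}$; convergence then depends on delicate cross-block cancellation for which no rate is available, and which the elementary estimates elsewhere in the paper cannot supply. Passing instead to coboundaries converts exactly this cancellation into a within-block telescoping, and $N_{\mathcal{I}}(n)/n\to0$ is precisely what renders the surviving $O(N_{\mathcal{I}}(n))$ boundary terms negligible, while the positive lower density enters only through Proposition \ref{p31}. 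The sole piece of genuine bookkeeping is the partial final block, absorbed by the isolated term $T^{a_r}y-T^{k_{n-1}+1}y$ above.
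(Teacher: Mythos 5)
Your proposal is correct and follows essentially the same route as the paper's proof: the same dense set $\operatorname{Fix}(T)+(I-T)(L_2\cap\mathcal{M})$ in $L_2$ (the paper cites Theorem 4.1 of \cite{li} for its density), the same within-block telescoping bound $\|M_n^{\textbf{k}}(T)(x)\|_\infty\le 2\|y\|_\infty(N_{\mathcal{I}}(n-1)+1)/n$, and the same passage to $L_p$ via $L_1\cap\mathcal{M}$, Proposition \ref{p31}, and Proposition \ref{p22}. No substantive differences.
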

\begin{proof}
First we will prove the claim for $x\in L_2(\mathcal{M},\tau)$. Since $T$ is a contraction on all of $L_2$, and not just its self-adjoint part, the set
$$\{x\in L_2:T(x)=x\}+\{x=y-T(y):y\in L_2\cap\mathcal{M}\}$$ is dense in $L_2$ (see Theorem 4.1 in \cite{li}). If $x\in L_2$ is such that $T(x)=x$, then $M_n^{\textbf{k}}(T)(x)=x$ for every $n$. Since for every $\epsilon>0$ there exists $e\in\mathcal{P}(\mathcal{M})$ such that $(M_n^{\textbf{k}}(T)(x)-x)e\in\mathcal{M}$ for every $n$ and $\tau(e^\perp)\leq\epsilon$, we conclude that $\{M_n^{\textbf{k}}(T)(x)\}_{n=1}^{\infty}$ converges a.u. to $x$.
	
Now assume that $x=y-T(y)$ for some $y\in L_2\cap\mathcal{M}$. Fix $n\geq1$. Then
\begin{align*}
M_n^{\textbf{k}}&(T)(x)
=\frac{1}{n}\sum_{j=0}^{n-1}T^{k_j}(y-T(y))=\frac{1}{n}\sum_{j=0}^{n-1}\left(T^{k_j}(y)-T^{k_j+1}(y)\right) \\
&=\frac{1}{n}\left((T^{a_{N_{\mathcal{I}}(n-1)}}(y)-T^{k_{n-1}+1}(y))+\sum_{i=0}^{N_{\mathcal{I}}(n-1)-1}\sum_{j=a_i}^{b_i}\left(T^{j}(y)-T^{j+1}(y)\right)\right) \\
&=\frac{1}{n}\left((T^{a_{N_{\mathcal{I}}(n-1)}}(y)-T^{k_{n-1}+1}(y))+\sum_{i=0}^{N_{\mathcal{I}}(n-1)-1}\left(T^{a_i}(y)-T^{b_i+1}(y)\right)\right) \\
\end{align*}
and since $T$ is a contraction on $\mathcal{M}$, we obtain
$$
\left\|M_n^{\textbf{k}}(T)(x)\right\|_\infty
\leq2\|y\|_\infty\frac{N_{\mathcal{I}}(n-1)+1}{n}
.$$
By assumption we know that $\lim_{n\to\infty}\frac{N_{\textbf{k}}(n)}{n}=0$, implying $\|M_n^{\textbf{k}}(T)(x)\|_\infty\to0$ as $n\to\infty$. Therefore $\{M_n^{\textbf{k}}(T)(x)\}_{n=1}^{\infty}$ converges uniformly, and so a.u., as $n\to\infty$.
	
Since $\textbf{k}$ has lower density $d>0$, $\{M_n^{\textbf{k}}(T)\}_{n=1}^{\infty}$ is u.e.m. at zero on $(L_2,\|\cdot\|_2)$ by Proposition \ref{p31}. Therefore, since $\{M_n^{\textbf{k}}(T)(x)\}_{n=1}^{\infty}$ converges a.u. on dense subset of $L_2$, it follows that $\{M_n^{\textbf{k}}(T)(x)\}_{n=1}^{\infty}$ converges a.u. for every $x\in L_2$ by Proposition \ref{p22}. In particular, this convergence holds for every $x\in L_1\cap\mathcal{M}$.
	
Since the averages $\{M_n^{\textbf{k}}(T)\}_{n=1}^{\infty}$ are also u.e.m. (b.u.e.m.) at zero on $(L_p,\|\cdot\|_p)$ for $2\leq p<\infty$ (respectively, $1\leq p<\infty$) by Proposition \ref{p31}, since a.u. convergence implies b.a.u. convergence, and since $L_1\cap\mathcal{M}$ is dense in $L_p$, it follows by Proposition \ref{p22} that $\{M_n^{\textbf{k}}(T)(x)\}_{n=1}^{\infty}$ converges a.u. (respectively, b.a.u.) for every $x\in L_p$ when $2\leq p<\infty$ (respectively, $1\leq p<\infty$). Finally, we observe as in the proof of Theorem \ref{t31}, that the limit $\widehat{x}\in L_p$.
\end{proof}

As mentioned in the proof, the proof of Theorem \ref{t42} above very closely follows Theorem 4 of \cite{lm}. One also sees that the proof of Theorem 4 of \cite{lm} is similar to the proof of Lemma 4.2 in \cite{cls}, which itself implies Theorem 4.6 of \cite{cls} and Theorem 3.1 in \cite{cl1}. Following the commonalities in the core parts of the proofs of these statements, we omit the proof of the following extension of Theorem \ref{t51} since it follows the same lines.

\begin{theorem}\label{t52} Let $\mathcal{M}$ have a separable predual and $2\leq p<\infty$ ($1\leq p<\infty$). If $\beta$ is a bounded Besicovich sequence and $\textbf{k}$ is a block sequence with lower density $d>0$ such that $\frac{N_{\mathcal{I}}(n)}{n}\to0$, then $\textbf{k}$ is a.u.- (respectively, b.a.u.-) good for $\beta$ in $L_p(\mathcal{M},\tau)$, with the limits  in $L_p(\mathcal{M},\tau)$.
\end{theorem}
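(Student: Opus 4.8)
The plan is to mirror the proof of Theorem \ref{t51}, reducing the Besicovich-weighted averages to the unweighted block-sequence averages of a \emph{modulated} operator, then passing from single characters to trigonometric polynomials and finally to $\beta$ by approximation. As before, I would first obtain a.u. convergence on $L_2$, transfer it to $L_1\cap\mathcal{M}$, and extend to all of $L_p$ via Propositions \ref{p31} and \ref{p22}. I will describe the a.u. case ($2\le p<\infty$); the b.a.u. case ($1\le p<\infty$) is identical, replacing u.e.m. by b.u.e.m. and using that a.u. convergence implies b.a.u. convergence.

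\textbf{Single characters.} First I would fix $\lambda\in\mathbb{T}$ and take the weight $\beta_i=\lambda^i$. Writing $S=\lambda T$, which is again a linear contraction on both $L_2$ and $\mathcal{M}$ since $|\lambda|=1$, one has $\beta_{k_j}T^{k_j}=\lambda^{k_j}T^{k_j}=S^{k_j}$, so that $M_n^{\beta,\textbf{k}}(T)(x)=M_n^{\textbf{k}}(S)(x)$. As in Theorem \ref{t51}, the set $\{x\in L_2:S(x)=x\}+\{y-S(y):y\in L_2\cap\mathcal{M}\}$ is dense in $L_2$ (Theorem 4.1 of \cite{li} applies to the $L_2$-contraction $S$). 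For an $S$-fixed vector $M_n^{\textbf{k}}(S)(x)=x$, while for a coboundary $x=y-S(y)$ the block structure collapses the telescoping sum exactly as in Theorem \ref{t51} to give
\[
\|M_n^{\textbf{k}}(S)(x)\|_\infty\leq 2\|y\|_\infty\,\frac{N_{\mathcal{I}}(n-1)+1}{n}\xrightarrow{\,n\to\infty\,}0.
\]
Thus $M_n^{\beta,\textbf{k}}(T)(x)$ converges a.u. on a dense subset of $L_2$; since $\beta$ is bounded and $\textbf{k}$ has lower density $d>0$, Proposition \ref{p31} gives u.e.m. at zero on $L_2$, and Proposition \ref{p22} then yields a.u. convergence for every $x\in L_2$.

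\textbf{Trigonometric polynomials and the Besicovich limit.} By linearity, for a trigonometric polynomial $P(k)=\sum_{i=1}^s r_i\lambda_i^k$ the averages decompose as $M_n^{P,\textbf{k}}(T)(x)=\sum_{i=1}^s r_i M_n^{\textbf{k}}(\lambda_i T)(x)$, so they converge a.u. on $L_2$, hence on $L_1\cap\mathcal{M}$ (a finite sum of a.u.-convergent sequences is a.u.-convergent after passing to the meet of the finitely many associated projections). Next, for a bounded Besicovich $\beta$, $x\in L_1\cap\mathcal{M}$, and $\epsilon>0$, I would choose a trigonometric polynomial $P$ with $\limsup_m\frac1m\sum_{k=0}^{m-1}|\beta_k-P(k)|<\epsilon$ and estimate
\[
\|M_n^{\beta,\textbf{k}}(T)(x)-M_n^{P,\textbf{k}}(T)(x)\|_\infty\leq\|x\|_\infty\,\frac1n\sum_{j=0}^{n-1}|\beta_{k_j}-P(k_j)|.
\]
Rewriting the subsequential sum as a full Cesàro average up to $k_{n-1}$,
\[
\frac1n\sum_{j=0}^{n-1}|\beta_{k_j}-P(k_j)|\leq\frac{k_{n-1}+1}{n}\cdot\frac{1}{k_{n-1}+1}\sum_{i=0}^{k_{n-1}}|\beta_i-P(i)|,
\]
and using $K=\sup_{n}\frac{k_n}{n}<\infty$ from Remark \ref{r31}, the $\limsup$ of the left side is at most $(K+1)\|x\|_\infty\epsilon$. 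Since $\epsilon$ is arbitrary and $M_n^{P,\textbf{k}}(T)(x)$ converges a.u., Lemma \ref{l31} shows that $M_n^{\beta,\textbf{k}}(T)(x)$ converges a.u. for every $x\in L_1\cap\mathcal{M}$.

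\textbf{Extension and the limit.} Since $\beta$ is bounded and $\textbf{k}$ has lower density $d>0$, Proposition \ref{p31} shows $\{M_n^{\beta,\textbf{k}}(T)\}_{n=1}^{\infty}$ is u.e.m. (b.u.e.m.) at zero on $L_p$ for $2\le p<\infty$ (resp.\ $1\le p<\infty$), so by Proposition \ref{p22} the set of $x$ for which $M_n^{\beta,\textbf{k}}(T)(x)$ converges a.u.\ (b.a.u.) is closed; as $L_1\cap\mathcal{M}$ is dense in $L_p$, it is all of $L_p$. That the limit lies in $L_p$ follows from the closed-unit-ball argument in Theorem \ref{t31} together with $\|M_n^{\beta,\textbf{k}}(T)(x)\|_p\le\|\beta\|_{\ell_\infty}\|x\|_p$. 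I expect the only genuinely delicate point to be the second display above: converting the Besicovich full-Cesàro approximation into control of the \emph{subsequential} average is exactly where positive lower density is essential, through the uniform bound $(k_{n-1}+1)/n\le K+1$; everything else is a faithful transcription of Theorem \ref{t51} with $T$ replaced by $S=\lambda T$.
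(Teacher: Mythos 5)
Your proof is correct and follows exactly the blueprint the paper points to for this omitted proof: the mean ergodic decomposition of $L_2$ applied to the contraction $S=\lambda T$ (which is legitimate since Theorem 4.1 of the cited reference only needs an $L_2$-contraction, and the telescoping block estimate only needs an $\|\cdot\|_\infty$-contraction), linearity over trigonometric polynomials, the Besicovich approximation via Lemma 3.1 --- where, as you note, the uniform bound $(k_{n-1}+1)/n\leq K+1$ coming from positive lower density is the one genuinely new point --- and the extension to $L_p$ via Propositions 3.1 and 2.2 together with the closed-unit-ball argument. Incidentally, your argument never invokes the separable-predual hypothesis, which the paper inherits from the cited Besicovich results but which appears dispensable when a single fixed weight sequence is treated by this direct approximation.
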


\noindent
{\bf Acknowledgements:} The author is grateful to Dr. Semyon Litvinov for his help in simplifying and improving numerous aspects of this paper. The author is also thankful to Dr. Do\u{g}an \c{C}\"{o}mez for his guidance and support with writing this paper.

\end{document}